\documentclass[preprint,12pt]{elsarticle}
\usepackage{amsmath,color}
\usepackage{amssymb}
\usepackage{amsthm}
\usepackage{graphicx}
\usepackage{color}
\usepackage{verbatim}
\usepackage[pdfencoding=auto]{hyperref}

\usepackage{tikz}
\usetikzlibrary{shapes,automata,positioning,decorations.pathmorphing}

\usepackage{lmodern}
\usepackage{url}

\newtheorem{theorem}{Theorem}
\newtheorem{corollary}{Corollary}
\newtheorem{lemma}{Lemma}
\newtheorem{proposition}{Proposition}

\journal{Journal of Computational and Applied Mathematics}

\newcommand{\gf}{generating function}
\newcommand{\bg}{\mathbf{g}}
\newcommand{\M}{\mathbf{M}}

\newcommand{\w}{w}
\newcommand{\n}{\ell}

\newcommand{\X}{X}
\newcommand{\PPP}{\mathbb{P}}

\newcommand{\PP}{P}
\newcommand{\F}{F}

\newcommand{\A}{A}
\newcommand{\Fiid}{\widetilde{\F}}
\newcommand{\Giid}{\widetilde{G}}

\newcommand{\FS}{Q}                     

\newcommand{\B}{U}

\newcommand{\be}{\mathbf{e}}


\newcommand{\pd}{\partial}

\newcommand*{\colorboxed}{}
\def\colorboxed#1#{%
  \colorboxedAux{#1}%
}
\newcommand*{\colorboxedAux}[3]{%
  \begingroup
    \colorlet{cb@saved}{.}%
    \color#1{#2}%
    \boxed{%
      \color{cb@saved}%
      #3%
    }%
  \endgroup
}

\listfiles

\begin{document}
\begin{frontmatter}

\title{Multiple consecutive runs of multi-state trials: distributions of \texorpdfstring{$(k_1, k_2, \dots, k_\n)$}{k1, k2, ..., kl} patterns}



\author{Yong Kong}
\address{
Department of Biostatistics
School of Public Health,
Yale University,
300 Cedar Street, New Haven, CT 06520, USA
email: \texttt{yong.kong@yale.edu} }

\newpage

\begin{abstract}
The pattern $(k_1, k_2, \dots, k_\n)$ is defined to have
  at least  $k_1$ consecutive $1$'s followed by at least  $k_2$ consecutive $2$'s,
  $\dots$, followed by at least  $k_\n$ consecutive $\n$'s.
  By iteratively applying the method that was developed previously to
  decouple the combinatorial complexity involved in studying complicated patterns
  in random sequences,
  the distribution of pattern $(k_1, k_2, \dots, k_\n)$ is derived
  for arbitrary $\n$.
  Numerical examples are provided to illustrate the results.
\end{abstract}

\begin{keyword}
     runs statistics \sep
     generating function \sep
     $(k_1, k_2, \dots, k_\n)$ pattern \sep
     multi-state trials \sep
     pattern in random sequences \sep
\end{keyword}

\end{frontmatter}

\section{Introduction} \label{S:intro}

For a multi-state trial with $\n$ outcome states,
with the states labeled as the numbers in the set $\{1, 2,, \dots, \n\}$,
the pattern $(k_1, k_2, \dots, k_\n)$ is defined to have
at least  $k_1$ consecutive $1$'s followed by at least  $k_2$ consecutive $2$'s,
$\dots$, followed by at least  $k_\n$ consecutive $\n$'s,
where $k_i$, $i=1, 2, \dots, \n$ are positive numbers.

As an example, for $\n=3$, $k_1=2$, $k_2=1$, and $k_3=2$,  the following sequence contains $3$
$(k_1, k_2, k_3)$ patterns:
\begin{equation} \label{E:example}
  223\colorboxed{red}{\!11123333\!}112\colorboxed{red}{\!1122233\!}2233\colorboxed{red}{\!112333\!}12 .
\end{equation}

The $(k_1, k_2, \dots, k_\n)$ pattern is a generalization of $(k_1, k_2)$ pattern.
The distribution of $(k_1, k_2)$ was investigated by different researchers
\cite{HUANG1991125,DAFNIS20101691,UPADHYE201819}.
\citet{HUANG1991125} named the distribution of $(k_1, k_2)$
as a binomial distribution of order $(k_1, k_2)$,
which was in turn motivated by the many studies on the well-known
binomial distribution of order $k$
\cite{Feller1968}.
The binomial distribution of order $k$ has been discussed extensively in literature
and
\citet{Balakrishnan2002} gives an excellent summary of the past and recent results of this and related distributions.

The study of the distributions of certain patterns in random sequences,
while itself an interesting mathematical problem,
has found many applications in a wide range of fields
~\citep{Balakrishnan2002,Knuth1997b,glaz2019handbook},
from biology and genomics research, to computer algorithms, to reliability analysis.
For the particular distribution of $(k_1, k_2)$ pattern,
~\citet*{DAFNIS20101691} gave an interesting application in meteorology and agriculture
where cultivation of raisins in Greece is considered.
For the ingathering a period of at least $4$ consecutive dry days is required. Before that period starts, a period of
at least $2$ consecutive rainy days is required in order for the raisins to be watered.
Here we have $k_1=2$ and $k_2=4$.
The distribution of $(k_1, k_2)$ was also used in the field of machine maintenance~\cite{UPADHYE201819}
where two types of machines were considered.
Related applications can be found in start-up demonstration tests and shock models
\cite{Balakrishnan2002,Balakrishnan2014}.
For applications in biology, it is known that
genes are often flanked by several binding sites for distinct transcription factors,
and efficient expression of these genes requires the cooperative actions of several different transcription factors.

The distribution of $(k_1, k_2)$ can only consider two outcome states.
However, quite often the outcomes have more than two states in practice.
In order for the theory to be applicable in these multiple-state situations,
for example in a new multi-state start-up
demonstration test where each start-up trial is
divided into more than two states,
the theory of $(k_1, k_2)$ distribution needs to be generalized from $\n=2$ to $\n > 2$,
to a theory of $(k_1, k_2, \dots, k_\n)$ distribution.
To the best of the author's knowledge, no theory of $(k_1, k_2, \dots, k_\n)$ distribution with
$\n > 2$ exists in the literature,
let alone a theory of $(k_1, k_2, \dots, k_\n)$ distribution with arbitrary $\n$.

Study of runs and related patterns in random sequences has a long history,
dated back to the beginning of the probability theory (see \cite{Mood1940,Feller1968,Glaz2001,Balakrishnan2002},
and references therein).
As the patterns under study become complicated, especially when dealing with multi-state or multiset systems,
the combinatorial complexity involved may become more challenging.
Several systematic methods have been devised to overcome the combinatorial
difficulties,
notably among them is the versatile finite Markov chain imbedding approach
\citep{Fu1994,Koutras1995,fu2003}. 
Another elegant systematic approach is the method of~\citet{Koutras1997b},
which takes advantage of the fact that the waiting time of a pattern
is usually easier to calculate than the number of patterns.
It derives the double generating function of the
number of appearances of a pattern by using
the generating function of the waiting time for the $r$th
appearance of the pattern.

We have introduced another systematic method to study various
distributions of runs in systems composed of multiple kinds of objects
that is inspired by methods in statistical physics
~\citep{Kong2006,Kong2016iwap,Kong2019}.
The method uses a two-step approach to decouple the study of complicated patterns
into two easy independent steps.
In the first step each kind of objects under study
(such as the ``success'' and ``failure'' states in a Bernoulli trial,
or different letter types in a multivariate random sequence)
is dealt with in isolation 
without considerations of the other kinds of objects.
In this step each object is encapsulated by a generating function $g_i$.
Since there is no consideration of other objects in this step,
the combinatorial complexities are eliminated and $g_i$ can be easily obtained.
In the second step, these individual generating functions $g_i$'s
are combined to form a generating function $G$ for the whole system.

This method has been applied to investigate various runs and patterns in random
sequences~\citep{EryIlmaz2008,eryilmaz2008run,Kong2014,Kong2016,Kong2015,Kong2018,Kong2020}.
In these previous works, the results were obtained by applying the method once directly.
The method, however, has the ability to  be applied iteratively
to build more complicated patterns from simpler patterns~\cite{,Kong2016iwap,Kong2019}.
We have briefly mentioned this possibility before~\cite{,Kong2016iwap,Kong2019},
but have not applied the method in this iterative fashion to a concrete pattern. 
In this paper this idea will be used to study patterns in multi-state trials.
In particular, the distribution of $(k_1, k_2, \dots, k_\n)$ pattern
will be derived by applying the method iteratively.

The rest of the paper is organized as follows.
In Section~\ref{S:method} a brief introduction of the general method is described.
Since the method is relatively new, this section is included here for the benefits of those readers
who are not familiar with the method.
In Section~\ref{S:2} we first obtain the distribution for $\n=2$.
Then using a variation of the results of $\n=2$ (Section~\ref{S:2p}),
we derive the  distribution for $\n=3$ (Section~\ref{S:3}).
The complete solution of the general case for arbitrary $\n$ is derived in Section~\ref{S:gen}.
The major result of the paper is Theorem~\ref{Th:genp},
where the generating function of distribution of pattern $(k_1, k_2, \dots, k_\n)$
for arbitrary $\n$ is given in a closed form.

\subsection{Notation} \label{SS:notation}
For positive number $\n  \geq 2$, let the states be labeled with the numbers in the set $\{1, 2,, \dots, \n\}$.
Let $p_i$ denote the probability of the $i$th state to appear in each position of the trial,
with $\sum_{i=1}^\n p_i = 1$.
Let $\X_{\n; k_1,k_2, \dots, k_\n}^{(n)}$ denote the number of $(k_1, k_2, \dots, k_\n)$ patterns in a sequence of length $n$.
In the particular example sequence~\eqref{E:example} shown at the beginning of this section,
we have $\X_{3; 2,1,2}^{(33)} = 3$.

Let $\PP_{\n,n}( m  )$ denote the probability of sequences of length $n$ with $m$ $(k_1, \dots, k_\n)$ patterns:
$\PP_{\n,n}( m  ) = \PPP(\X_{\n; k_1,k_2, \dots, k_\n}^{(n)} = m)$.
The sum of $\PP_{\n,n}( m  )$ over all possible $m$ is $1$: $\sum_{j=0} \PP_{\n, n}(j) = 1$.
The probability \gf{} of $\PP_{\n, n}( m )$ for sequences of length $n$ is denoted by
\[
  \F_{\n,n}(\w) = \sum_{j=0} \PP_{\n, n}(j) \w^j,
\]
and the double probability generating function is denoted by
\[
  G_\n(\w, z) = \sum_{n=0}^\infty \F_{\n,n}(\w) z^n =    \sum_{n=0}^\infty \sum_{j=0} \PP_{\n, n}(j) \w^j z^n  .
\]
Similarly, for  independent identical distribution (i.i.d.) sequences,
where $p_1 = p_2 = \cdots = p_\n =  1/\n$,
let $\A_{\n,n}(m)$
denote the \emph{number} of sequences
that contain $m$ $(k_1, \dots, k_\n)$ patterns for trial sequences of length $n$.
The sum of $\A_{\n,n}(m)$ over all possible $m$ is $\n^n$: $\sum_{j=0} \A_{\n, n}(j) = \n^n $.
The \gf{} of $\A_{\n,n}(m)$ for sequences of length $n$ is denoted by
\[
  \Fiid_{\n,n}(\w) = \sum_{j=0} \A_{\n, n}(j) \w^j,
\]
and the double probability generating function is denoted by
\[
  \Giid_\n(\w, z) = \sum_{n=0}^\infty \Fiid_{\n,n}(\w) z^n =    \sum_{n=0}^\infty \sum_{j=0} \A_{\n, n}(j) \w^j z^n  .
\]

In the following $[x^n]f(x)$ is used to denote
the coefficient of $x^n$ in the series of $f(x)$ in powers of $x$.

\section{The general method: decoupling the combinatorial complexity
into two simple independent steps} \label{S:method}

For an abstract pattern under study,
the general method~\citep{Kong2006,Kong2016iwap,Kong2019} for decoupling the combinatorial complexity
into two simple independent steps
consists of two key elements, the blocks of sequences and the ``interactions'' between
adjacent blocks.
Suppose we have $\n$ kinds of objects, such as the numbers in the set $\{1, 2,, \dots, \n\}$.
Each object can appear multiple times.
When these objects form a linear sequence, we can divide the sequence into blocks, based on the patterns we're investigating.
Each block can be described by a generating function (denoted by $g_i$'s below)
that encapsulates the objects that make up the block.
Each block represents a particular pattern of the objects, such as a run of object $i$ with length less than $k_i$.
More generally, the blocks can also represent compound patterns.
A compound pattern is a pattern composed of a set of finer-grained patterns.
It's this feature that makes the method powerful to attack complicated patterns in an iterative way,
to build up a whole system from simpler sub-systems
(see the applications to the $(k_1, k_2, \dots, k_\n)$ pattern
in Section~\ref{S:3} and Section~\ref{S:gen}).

The ``interactions'' (denoted by $\w_{ij}$'s below) between the blocks are used to create restrictions on the relations between
adjacent blocks, and they can be arbitrary.
The term ``interactions'' is taken from the study of cooperativity
in biochemistry and biophysics~\citep{di1996theory}.

The whole system can be combined and enumerated
by the following matrix formula,
resulting in a generating function $G$ of the whole system
\citep{Kong2006,Kong2016iwap,Kong2019}:

\begin{equation} \label{E:matrix}
  G = \be \M^{-1} \bg,
\end{equation}
where
$\M$ is a $s \times s$ matrix,
\begin{equation*} \label{E:ek}
   \be = [\underbrace{1, 1, \cdots, 1}_s],
\end{equation*}
\begin{equation} \label{E:g}
   \bg = [g_1, g_2, \cdots, g_s]^T,
\end{equation}
and
\begin{equation} \label{E:M}
 \M =   
  \begin{bmatrix}
           1 & -g_1 \w_{12}  & -g_1 \w_{13}  & \cdots & -g_1 \w_{1s} \\
 -g_2 \w_{21} &  1           & -g_2 \w_{23}  & \cdots & -g_2 \w_{2s} \\
      \vdots & \vdots       & \vdots       & \vdots & \vdots      \\
 -g_s \w_{s1} & -g_s \w_{s2}  & -g_s \w_{s3}  & \cdots & 1 
   \end{bmatrix}.
\end{equation}
Here $g_i$ is the generating function of
the $i$th individual object
when considered in isolation (without considering the interactions with the other objects),
and $\w_{ij}$ represents the interactions between an 
$i$th kind of block \emph{on the left} and a $j$th kind block \emph{on the right}.
The  $\w_{ij}$'s can be used to mark the interactions between blocks represented by $g_i$ and $g_j$.
When $\w_{ij}$ is set to $0$, the interactions between blocks $g_i$ and $g_j$ are forbidden.
In other words, no configurations of $g_i \cdot g_j$ is allowed in the sequences.

\begin{center}
\begin{tikzpicture}[] 
  \begin{scope}[every node/.style={thick}]
    \node[draw, star,star points=10,star point ratio=0.7, minimum width=0.7cm] (w) at (2,0) {$\w_{ij}$};
    \node[draw, rectangle, left=1em,  minimum width=1.5cm]  at (w.west) (i)  {$g_i$};
    \node[draw, rectangle, right=1em, minimum width=1.5cm] at (w.east) (j) {$g_j$};
    \node[left=1em of i] (ii) {};
    \node[right=1em of j] (jj) {};
    \draw [-]  (i) edge (w);
    \draw [-]  (w) edge (j);
    \draw [-]  (ii) edge (i);
    \draw [-]  (j) edge (jj);
\end{scope}
\end{tikzpicture}
\end{center}

The method can be used iteratively to study more complicated patterns.
The output from the method, the generating function $G$ built up from the lower level generating function $g_i$'s,
can be used as inputs in Eq.~\eqref{E:g} to feed into Eq.~\eqref{E:matrix} again,
to obtain the next level of generating functions.
Suppose in the $t$th iteration, for the $j$th sub-system,
the input generating functions in Eq.~\eqref{E:g} are labeled as
$g_{j, i}^{(t)}$, $i, j=1,2, \dots$.  With these $g_{j, i}^{(t)}$ as inputs,
we can use Eq.~\eqref{E:matrix} to compute a new set of generating functions of the next level (the $(t+1)$th iteration):
\[
  g_j^{(t+1)} (  g_{j, 1}^{(t)}, g_{j, 2}^{(t)}, \cdots), \quad j=1, 2, \dots
\]
Those $g_j^{(t+1)}$ can then in turn be used in the next iteration as inputs to obtain the $(t+2)$th set of generating functions
\[	
  g_j^{(t+2)} \left[g_1^{(t+1)} (  g_{1, 1}^{(t)}, g_{1, 2}^{(t)}, \cdots), g_2^{(t+1)} (  g_{2, 1}^{(t)}, g_{2, 2}^{(t)}, \cdots), \cdots
    \right].
\]
We will illustrate in the following this iterative process of building up more complicated system from
relatively simpler sub-systems by applying the method to the $(k_1, k_2, \dots, k_\n)$ patterns.

\section{The case of \texorpdfstring{$\n=2$}{l=2}} \label{S:2}

First, we consider the case for $\n=2$.
The following Proposition gives the double \gf{} for $\n=2$.

\begin{proposition}[distribution of $(k_1, k_2)$] \label{P:k1k2p}
  The double generating function for $\PP_{2, n} ( m )$ is given by
\begin{equation} \label{E:k1k2p}
  G_2(\w, z) = \frac{1}{1 - z - (w-1) p_1^{k_1}p_2^{k_2} z^{k_1+k_2}} .
\end{equation}

\begin{proof}
  Let the matrix $\M$ in Eq.~\eqref{E:M} be the following $4 \times 4$ matrix
  \[
    \M = 
  \begin{bmatrix}
    1 & 0 & -g_{1}' & -g_{1}' \\
    0 & 1 & -g_{1} & -g_{1} \w \\
    -g_{2}' & -g_{2}' & 1 & 0 \\
    -g_{2} & -g_{2} & 0 & 1
  \end{bmatrix},
\]
where we represent each state by two \gf{}s: $g_i$ and $g_i'$ $(i=1, 2)$.
The $g_i$ and $g_i'$ $(i=1, 2)$ are given by
\begin{equation} \label{E:gi}
\begin{aligned}
  g_i' &= \sum_{j=1}^{k_i-1} (p_i z)^j = \frac{p_i z - (p_i z)^{k_i} }{1 - p_i z}, \\
  g_i  &= \sum_{j=k_i}^\infty  (p_i z)^j = \frac{ (p_i z)^{k_i} } {1 - p_i z} .
\end{aligned}
\end{equation}
and
\[
     \bg = [g_1', g_1, g_2', g_2]^T.
   \]

For $i=1, 2$,
the generating function $g_{i}'$ is used to represent runs of $i$'s with a length less than $k_i$,
  while the generating function $g_{i}$ is used to represent runs of $i$'s with a length
  at least $k_i$.

The interaction factor $\w_{2, 4} = \w$ is used to track the number of patterns with a block of $g_{1}$ on the left and
a block of $g_{2}$ on the right:
its row index in the matrix $\M$ is for $g_1$,
while its column index in the matrix $\M$ is for $g_2$,
hence it represents configurations $g_1 \cdot g_2$.

  The interaction factors between $g_{1}'$ and $g_{1}$ are zero: $\w_{1, 2} = \w_{2, 1} = 0$,
  since these two blocks are forbidden to be adjacent to each other.
  Otherwise there would be ambiguity for the configurations that are tracked by $\w$.
  Similarly, the interaction factors between $g_{2}'$ and $g_{2}$ are zero: $\w_{3, 4} = \w_{4, 3} = 0$.

  Substituting matrix $\M$ and the four generating functions into
  Eq.~\eqref{E:matrix} gives the generating function $G_2(\w, z)$ in Eq.~\eqref{E:k1k2p}.
\end{proof}
\end{proposition}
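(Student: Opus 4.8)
The plan is to take the matrix data in the statement — four block types, the generating functions $g_i,g_i'$ of Eq.~\eqref{E:gi}, and the interaction pattern encoded in $\M$ — verify that this correctly models the counting problem, apply the matrix formula~\eqref{E:matrix}, and then collapse the resulting $4\times4$ rational expression using $p_1+p_2=1$. For the encoding: every sequence over $\{1,2\}$ factors uniquely into its maximal runs, which alternate between runs of $1$'s and runs of $2$'s. Call a maximal run of $i$'s \emph{short} if its length lies in $\{1,\dots,k_i-1\}$ and \emph{long} if its length is $\geq k_i$; its length \gf{}, with $z$ marking trial positions and weight $p_i$ per position, is $g_i'$ in the short case and $g_i$ in the long case, exactly as in Eq.~\eqref{E:gi}. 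A $(k_1,k_2)$ pattern occurs precisely when a long run of $1$'s is immediately followed by a long run of $2$'s, i.e.\ at a $g_1 \cdot g_2$ junction and at no other junction, so the interaction $\w_{24}=\w$ marks exactly the statistic $\X_{2; k_1, k_2}^{(n)}$. Because two runs of the same state are never adjacent in a maximal-run decomposition — in particular a short and a long run of the same state must not touch, else the long/short classification tracked by $\w$ would be ambiguous — one sets $\w_{12}=\w_{21}=\w_{34}=\w_{43}=0$, while every other (cross-state) junction carries weight $1$. These choices are precisely the matrix $\M$ of the statement, together with $\bg=[g_1',g_1,g_2',g_2]^T$.

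Next I would apply Eq.~\eqref{E:matrix}. Expanding $\M^{-1}=\sum_{m\ge0}(\mathbf{I}-\M)^m$ shows $\be\M^{-1}\bg=\sum_{m\ge0}\sum_{i_0,\dots,i_m} g_{i_0}\,\w_{i_0 i_1}g_{i_1}\cdots\w_{i_{m-1}i_m}g_{i_m}$, which enumerates every nonempty sequence with $z$ marking length and $\w$ marking $(k_1,k_2)$ patterns; adjoining the empty sequence supplies the constant term, so $G_2(\w,z)=1+\be\M^{-1}\bg$. Concretely I would solve $\M\mathbf{x}=\bg$: the third and fourth rows give $x_3=g_2'(1+x_1+x_2)$ and $x_4=g_2(1+x_1+x_2)$; substituting these into the first two rows and writing $S=1+x_1+x_2$ yields
\[
  S=\frac{1+g_1'+g_1}{\,1-(g_1'+g_1)(g_2'+g_2)-(\w-1)g_1g_2\,},
\]
and then $G_2=1+\sum_i x_i=(1+g_2'+g_2)\,S$.

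Finally comes the simplification, which is the only place real work is needed. From Eq.~\eqref{E:gi}, $g_i'+g_i=\frac{p_iz}{1-p_iz}$, hence $1+g_i'+g_i=\frac1{1-p_iz}$ and $g_1g_2=\frac{p_1^{k_1}p_2^{k_2}z^{k_1+k_2}}{(1-p_1z)(1-p_2z)}$; substituting, the denominator of $S$ becomes
\[
  \frac{(1-p_1z)(1-p_2z)-p_1p_2z^2-(\w-1)p_1^{k_1}p_2^{k_2}z^{k_1+k_2}}{(1-p_1z)(1-p_2z)} .
\]
The decisive identity is $(1-p_1z)(1-p_2z)-p_1p_2z^2=1-(p_1+p_2)z=1-z$, which uses $p_1+p_2=1$; it produces the $1-z$ of the target denominator and simultaneously cancels the spurious poles at $p_iz=1$. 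The remaining factors $1-p_iz$ then cancel against $1+g_1'+g_1$ and $1+g_2'+g_2$, leaving exactly Eq.~\eqref{E:k1k2p} (whose constant term is $1$, consistent with $\F_{2,0}(\w)=1$). The main obstacle is really the first step — checking that the four blocks and their interactions count each pattern exactly once and forbid precisely the illegal adjacencies — because once that encoding is pinned down, the rest is forced and purely mechanical.
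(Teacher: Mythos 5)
Your proposal is correct and takes essentially the same route as the paper: the identical $4\times 4$ matrix $\M$, the same block generating functions $g_i,g_i'$, and the same interaction factors, with the only difference being that you carry out explicitly the linear-system solution and the $p_1+p_2=1$ simplification that the paper compresses into ``substituting \dots gives.'' Your explicit inclusion of the empty-sequence term ($G_2 = 1 + \be\M^{-1}\bg$) is a sound reading, since it is what makes the constant term of Eq.~\eqref{E:k1k2p} come out as $\F_{2,0}(\w)=1$.
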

We mention here that Proposition~\ref{P:k1k2p} has also been obtained by
~\citet{HUANG1991125} and \citet*{DAFNIS20101691} via different methods.

\section{The case of \texorpdfstring{$\n=2$}{l=2}, with pattern \texorpdfstring{$(k_1, k_2)$}{(k1, k2)} at the right end} \label{S:2p}

Proposition~\ref{P:k1k2p} gives the distribution of a pattern of
at least $k_1$ consecutive $1$’s followed by at least $k_2$ consecutive $2$’s (the $(k_1, k_2)$ pattern)
for a complete system with $2^n$ sequences of $1$'s and $2$'s.
The pattern $(k_1, k_2)$ can appear anywhere in the sequences of length $n$.
To obtain the distribution of $(k_1, k_2, k_3)$ for $\n=3$,
with at least $k_1$ consecutive $1$’s followed by at least $k_2$ consecutive $2$’s
followed by at least $k_3$ consecutive $3$’s,
we need the information about the distribution of pattern $(k_1, k_2)$ \emph{at the right end} of the sequences,
so that an interaction factor can be used to link these blocks with blocks of at least $k_3$ consecutive $3$’s
on the right
to build up patterns of  $(k_1, k_2, k_3)$.
The following Lemma gives what we need.

\begin{lemma}[$\n=2$, with  pattern $(k_1, k_2)$ at the right end] \label{L:k1k2p_p}
  The generating function for the probability that
  the pattern $(k_1, k_2)$ appears at the right end of the sequences
  is given by
\begin{equation} \label{E:k1k2p_p}
  H_2(z)  = \frac{p_1^{k_1}p_2^{k_2} z^{k_1+k_2}}{ (1-p_2 z) (1-p_1 z-p_2z ) } .
\end{equation}
The complement generating function, for all other sequences that do not have such a pattern,
is given by
\begin{equation} \label{E:k1k2pp_p}
  H_2'(z)  = \frac{ z(1-p_2z)(p_1+p_2) - p_1^{k_1}p_2^{k_2} z^{k_1+k_2} }
  {(1-p_2z)(1 - (p_1+p_2)z) } .
\end{equation}
\begin{proof}
  We need an additional special block made of at least  $k_2$ consecutive $2$'s
  which only appears at the right end of the sequences;
  its interaction factors with blocks on its right
  are all zero. The following $5 \times 5$ matrix will be used:
  \[
    \M= 
  \begin{bmatrix}
    1 & 0 & -g_{1}' & -g_{1}' & -g_{1}' \\
    0 & 1 & -g_{1} & -g_{1} & -g_{1} u \\
    -g_{2}' & -g_{2}' & 1 & 0 & 0  \\
    -g_{2} & -g_{2} & 0 & 1 & 0  \\
    0 & 0 & 0 & 0 & 1
  \end{bmatrix},
\]
where $g_{i_j}$ are the same as those used in Proposition~\ref{P:k1k2p}, and
\[
     \bg = [g_1', g_1, g_2', g_2, g_2]^T.
   \]
   Here the interaction variable $u$ is used to track those blocks of at least $k_1$ consecutive $1$'s
   on the left (represented by $g_1$) followed by a block of at least $k_2$ consecutive $2$'s on the right
   (represented by the last $g_2$ in the vector $\bg$),
   with an additional requirement that the block occurs at the right end of the sequence.
   This requirement is fulfilled by the interaction factors on the last row of the matrix $\M$:
   these interaction factors are all zero:
   \[
     \w_{5,1} = \w_{5,2} = \w_{5,3} = \w_{5,4} = 0, 
   \]
   which guarantees that there is no block appears on the right of these
   special blocks, represented by the last $g_2$ in the vector of $\bg$.
   
Substituting matrix $\M$ and the four generating functions into
Eq.~\eqref{E:matrix} gives the generating function $\FS_2(u, z)$,
which includes two parts:
one part includes the pattern we are looking for (tracked by variable $u$) and the other part that does not.
If the generating function $\FS_2(u, z)$ is considered as a polynomial in $u$,
the second part is the constant term of this polynomial.
The first part is  what we need to build up the system of $\n=3$.
This part is obtained by extracting the coefficient of $u$ in $\FS_2(u,z)$:
\[
  H_2(z) = [u] \FS_2(u, z) =  \frac{p_1^{k_1}p_2^{k_2} z^{k_1+k_2}}{ (1-p_2 z) (1-p_1 z-p_2z ) }  .
\]
The complement generating function Eq.~\eqref{E:k1k2pp_p} is obtained by subtracting $H_2(z)$
from the full system of $\n=2$:
\[
  H_2'(z) = \frac{ (p_1+p_2)z }{ 1 - (p_1+p_2)z  } - H_2(z)
  = \frac{ z(1-p_2z)(p_1+p_2) - p_1^{k_1}p_2^{k_2} z^{k_1+k_2} }
  {(1-p_2z)(1 - (p_1+p_2)z) } .
\]
\end{proof}
\end{lemma}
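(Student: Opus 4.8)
\medskip
\noindent\emph{Proof proposal.}
The plan is to apply the matrix formula~\eqref{E:matrix} once more, exactly as in the proof of Proposition~\ref{P:k1k2p}, but with the block alphabet enlarged by one ``terminal'' block that is pinned to the right end of the sequence. I would keep the four blocks $g_1',g_1,g_2',g_2$ of Proposition~\ref{P:k1k2p} (a run of $1$'s of length $<k_1$, a run of $1$'s of length $\ge k_1$, and the analogues for $2$'s) and adjoin a fifth block, again equal to $g_2$, whose interaction factors with \emph{every} block on its right are set to $0$. The remaining interactions are fixed so that the block decomposition of a $\{1,2\}$-sequence is unique: the factors between $g_1'$ and $g_1$, between $g_2'$ and $g_2$, and between the terminal block and a block $g_2'$ or $g_2$ on its left are all $0$ (two runs of the same symbol may never be adjacent), every other factor is $1$, except the one from a $g_1$-block into the terminal block, which is marked by a variable $u$. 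A configuration carries a factor $u$ precisely when the encoded sequence ends in a maximal run of $\ge k_2$ $2$'s immediately preceded by a maximal run of $\ge k_1$ $1$'s, i.e.\ precisely when it ends with the $(k_1,k_2)$ pattern; so, writing $\FS_2(u,z)=\be\M^{-1}\bg$ for the resulting $5\times5$ system, one has $H_2(z)=[u]\,\FS_2(u,z)$.

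The computation is then routine. Because the last row of $\M$ has only its diagonal entry, $\M$ is upper block triangular with the $4\times4$ matrix of Proposition~\ref{P:k1k2p} --- specialized to $\w=1$, since here the plain $g_1$-to-$g_2$ transition is not marked --- in the top-left corner; the terminal block therefore occurs at most once, so $\M^{-1}$, and hence $\FS_2(u,z)$, is affine in $u$, and $\be\M^{-1}\bg$ separates cleanly into a $u$-independent part and a $u$-linear part. Substituting the closed forms of $g_1$ and $g_2$ into the latter and clearing denominators gives~\eqref{E:k1k2p_p}. For the complement I would use subtraction: $H_2(z)+H_2'(z)$ is the generating function of all nonempty $\{1,2\}$-sequences, $\frac{(p_1+p_2)z}{1-(p_1+p_2)z}$, which is Proposition~\ref{P:k1k2p} at $\w=1$ after dropping the empty-sequence term and relaxing the normalization $p_1+p_2=1$ (in the $\n=3$ construction we will have $p_1+p_2=1-p_3$); hence $H_2'(z)=\frac{(p_1+p_2)z}{1-(p_1+p_2)z}-H_2(z)$, which over the common denominator is~\eqref{E:k1k2pp_p}.

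I expect the algebra to be short, so the point needing care is the modeling: one must check that every sequence ending in $(k_1,k_2)$ arises from exactly one configuration carrying a single $u$, and that no pattern-free sequence ever acquires a $u$. This comes down to verifying that the maximality constraints --- the zero interactions between like-symbol blocks and the zero last row pinning the terminal block --- are exactly the right ones and that the terminal block can appear only as the final block. As a cross-check one can rederive $H_2(z)$ by hand: a $\{1,2\}$-sequence ends with the pattern if and only if it factors uniquely as a prefix that is empty or ends in a $2$, followed by a run of at least $k_1$ $1$'s, followed by a run of at least $k_2$ $2$'s, and these three factors contribute $\frac{1-p_1z}{1-(p_1+p_2)z}$, $g_1$, and $g_2$, whose product is indeed~\eqref{E:k1k2p_p}. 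A final minor check is the empty-sequence and normalization bookkeeping when passing from the $p_1+p_2=1$ statement of Proposition~\ref{P:k1k2p} to general $p_1,p_2$, since this lemma is meant to be fed back into the method as a sub-block.
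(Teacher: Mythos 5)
Your proposal is correct and follows essentially the same route as the paper: the same $5\times5$ matrix with a terminal $g_2$ block pinned to the right end by a zero last row, the interaction $u$ marking the $g_1$-to-terminal transition, extraction of $[u]\FS_2(u,z)$ for $H_2$, and subtraction from $\frac{(p_1+p_2)z}{1-(p_1+p_2)z}$ for $H_2'$. Your direct factorization cross-check (prefix empty or ending in $2$, then $g_1$, then $g_2$) is a worthwhile independent verification that the paper does not include, but it does not change the method.
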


\section{The case of \texorpdfstring{$\n=3$}{l=3}} \label{S:3}

We're now ready to use the results of $\n=2$ from Section~\ref{S:2p}
to obtain the distribution of pattern $(k_1, k_2, k_3)$ for $\n=3$.

\begin{proposition}[distribution of $(k_1, k_2, k_3)$] \label{P:k1k2k3p}
  The double probability generating function for $\n=3$ is given by
\begin{equation} \label{E:k1k2k3p}
  G_3(\w, z) = \frac{1 - p_2 z}{(1-z)(1 - p_2 z) - (\w-1)p_1^{k_1}p_2^{k_2}p_3^{k_3} z^{k}} ,
\end{equation}
where $k=k_1+k_2+k_3$.  
\begin{proof}
  Eq.~\eqref{E:k1k2k3p} is obtained by using the following matrix in Eq.~\eqref{E:matrix}
\[
  \begin{bmatrix}
    1 & 0 & -H_2' & -H_2' \\
    0 & 1 & -H_2 & -H_2 \w \\
    -g_{3}' & -g_{3}' & 1 & 0 \\
    -g_{3} & -g_{3} & 0 & 1
  \end{bmatrix},
\]
where $H_2$ and $H_2'$ are given in Lemma~\ref{L:k1k2p_p},
and
\[
     \bg = [H_2', H_2, g_3', g_3]^T,
\]
with $g_3'$ and $g_3$ given in Eq.~\eqref{E:gi} for $i=3$.

The interaction factors $w_{1,2}$ and $w_{2,1}$ are set to zero to
forbid the formation of configurations such as $H_2' \cdot H_2$ or
$H_2 \cdot H_2'$.
Similarly,
the interaction factors $w_{3,4}$ and $w_{4,3}$ are set to zero to
forbid the formation of configurations such as $g_3' \cdot g_3$ or
$g_3 \cdot g_3'$, as we did in Proposition~\ref{P:k1k2p} and Lemma~\ref{L:k1k2p_p}.

The interaction factor $\w_{2, 4} = \w$ is used to track configurations of
$H_2 \cdot g_3$.  By definitions of $H_2(z)$ and $g_3(z)$,
these configurations are exactly the patterns $(k_1, k_2, k_3)$
that we are looking for.

\end{proof}
\end{proposition}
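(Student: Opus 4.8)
The plan is to view any word over $\{1,2,3\}$ as a one-dimensional arrangement of two coarse-grained kinds of blocks and to feed these into the master formula~\eqref{E:matrix}, in exactly the iterative spirit of Section~\ref{S:method}: the $\n=2$ output of Section~\ref{S:2p} becomes the raw material for $\n=3$. Concretely, I would cut a word over $\{1,2,3\}$ at every maximal run of $3$'s, which writes it uniquely as an alternating concatenation of \emph{$\{1,2\}$-blocks} (maximal stretches using only the letters $1$ and $2$) and \emph{runs of $3$'s}. A run of $3$'s is classified by whether its length is $<k_3$ or $\geq k_3$, encapsulated respectively by $g_3'$ and $g_3$ from~\eqref{E:gi} with $i=3$; a $\{1,2\}$-block is classified by whether, read as a word over $\{1,2\}$, it already ends in a $(k_1,k_2)$ pattern, which by Lemma~\ref{L:k1k2p_p} is encapsulated respectively by $H_2(z)$ and $H_2'(z)$. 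The decisive combinatorial observation I would establish is that a $(k_1,k_2,k_3)$ pattern occurs once for each place where an $H_2$-type block is immediately followed by a $g_3$-type block, and nowhere else: the $1$'s-then-$2$'s part of such a pattern is forced to be a suffix of a single $\{1,2\}$-block since the following letter is a $3$, while the trailing $\geq k_3$ threes are forced to be the start of the adjacent run of $3$'s, and conversely every such junction produces exactly one occurrence. I expect making this bijection precise to be where conceptual care is most needed, since the correctness of the whole reduction rests on it.

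With the four block generating functions in hand, I would form $\bg=[H_2', H_2, g_3', g_3]^T$ and the $4\times4$ matrix $\M$ whose interaction entries encode precisely the adjacency rules just identified: $\w_{1,2}=\w_{2,1}=0$ because two $\{1,2\}$-blocks cannot be neighbours without merging, $\w_{3,4}=\w_{4,3}=0$ because two runs of $3$'s cannot be neighbours without merging, $\w_{2,4}=\w$ to tag each $H_2\cdot g_3$ junction (equivalently, each $(k_1,k_2,k_3)$ pattern), and every remaining interaction equal to $1$ so that the other legal adjacencies are simply permitted. This is structurally the same matrix as in Proposition~\ref{P:k1k2p}, with the run-of-$1$'s generating functions replaced by the compound blocks $H_2', H_2$ and the run-of-$2$'s generating functions replaced by $g_3', g_3$. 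By the general principle of Section~\ref{S:method}, $G_3(\w,z)=\be\M^{-1}\bg$ is then the desired double probability generating function.

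It remains to reduce $\be\M^{-1}\bg$ to the closed form~\eqref{E:k1k2k3p}. I would substitute the explicit rational expressions~\eqref{E:k1k2p_p} and~\eqref{E:k1k2pp_p} for $H_2, H_2'$ and~\eqref{E:gi} for $g_3', g_3$, and exploit the telescoping identities $H_2+H_2'=(p_1+p_2)z/\bigl(1-(p_1+p_2)z\bigr)$ and $g_3+g_3'=p_3z/(1-p_3z)$ together with $p_1+p_2+p_3=1$, which collapses $1-(p_1+p_2)z-p_3z$ to $1-z$. I anticipate that the geometric-series factors $1-(p_1+p_2)z$ and $1-p_3z$ then cancel between numerator and denominator, that the $\w$-dependence survives only through the single monomial $(\w-1)H_2 g_3$, proportional to $(\w-1)p_1^{k_1}p_2^{k_2}p_3^{k_3}z^{k}$ with $k=k_1+k_2+k_3$, and that the one factor $1-p_2z$ that is not cancelled — the denominator contributed by $H_2$ — is exactly what surfaces in the numerator of~\eqref{E:k1k2k3p}. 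This algebraic simplification, not the combinatorial set-up, is the only computationally laborious step; as consistency checks one verifies $G_3(1,z)=(1-p_2z)/\bigl[(1-z)(1-p_2z)\bigr]=1/(1-z)$, forced by $\sum_{j}\PP_{3,n}(j)=1$, and that letting $k_3\to\infty$ (so $g_3\equiv0$) returns $1/(1-z)$ with the $\w$-dependence gone, as it must since no $(k_1,k_2,k_3)$ pattern can then occur.
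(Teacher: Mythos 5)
Your proposal is correct and follows essentially the same route as the paper: the identical block decomposition into $\{1,2\}$-blocks ($H_2', H_2$ from Lemma~\ref{L:k1k2p_p}) and runs of $3$'s ($g_3', g_3$), the same $4\times 4$ matrix with $\w_{1,2}=\w_{2,1}=\w_{3,4}=\w_{4,3}=0$ and $\w_{2,4}=\w$, and the same substitution into Eq.~\eqref{E:matrix}. The extra detail you supply --- the explicit bijection between $(k_1,k_2,k_3)$ occurrences and $H_2\cdot g_3$ junctions, the telescoping identities, and the consistency checks at $\w=1$ and $k_3\to\infty$ --- is all sound and merely fills in steps the paper leaves implicit.
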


A numerical example is given here to illustrate Eq.~\eqref{E:k1k2k3p}.
The recurrence relation in Corollary~\ref{C:recur}
discussed for the arbitrary $\n$ in Section~\ref{S:gen}
can be used for the calculation.
Alternatively, the \gf{}~\eqref{E:k1k2k3p} can be expanded into a series of $z$
by any modern symbolic mathematical package, such as Maple and Mathematica.
For $n=17$, $k_1=2, k_2=2, k_3=3$,
and $p_1=1/6$, $p_2=1/3$, $p_3=1/2$, we have from Eq.~\eqref{E:k1k2k3p}
the following probability generating function $\F_{3,17}(\w) = \sum_j \PP_{3,17} ( j ) \w^j$:
\[
  \F_{3,17}(\w) = 0.9939258642 + 0.006071881114 \w + 0.000002254704073 \w^2 .
\]
This probability generating function shows that for a short sequence
like this ($n=17$),
the probability to have one or two $(k_1, k_2, k_3)$ patterns is quite low
($\PP_{3,17}(1) = 0.006071881114$ and $\PP_{3,17}(2) = 0.000002254704073$, respectively),
and the probability that no $(k_1, k_2, k_3)$ pattern occurs is high ($\PP_{3,17}(0) = 0.9939258642$).

From Proposition~\ref{P:k1k2k3p} we can get the average and other statistics
of the distribution of $(k_1, k_2, k_3)$.
In the following Corollary the average of $\X_{3; k_1,k_2, k_3}^{(n)}$ is given in a closed form.
\begin{corollary}[average of $\X_{3;  k_1, k_2, k_3}^{(n)}$] \label{C:a3p}
  The average of $\X_{3; k_1, k_2, k_3}^{(n)}$ is given by
  \begin{equation} \label{E:a3p}
    E[ \X_{3; k_1, k_2, k_3}^{(n)}] = 
  p_1^{k_1}p_2^{k_2}p_3^{k_3}
  \left[
    \frac{n-k+1}{1-p_2}
    - \frac{p_2 (1 - p_2^{n-k+1}) }{(1-p_2)^2}
    \right],
  \end{equation}
  where $k=k_1+k_2+k_3$.
\begin{proof}
  Take the derivative of Eq.~\eqref{E:k1k2k3p} with respect to $\w$, and set $\w=1$:
\[
  \frac{ \pd G_3(z, \w)} {\pd w} \Big |_{w=1} = \frac{p_1^{k_1}p_2^{k_2}p_3^{k_3}  z^k}{(1 - p_2 z)(1 - z)^2} .
\]  
Eq.~\eqref{E:a3p} is obtained by extracting the coefficient of $z$ from the above equation.
\end{proof}
\end{corollary}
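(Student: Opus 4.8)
The plan is to obtain the mean by differentiating the double generating function $G_3(\w,z)$ of Proposition~\ref{P:k1k2k3p} with respect to the pattern‑counting variable $\w$ and then extracting a coefficient in $z$. Recall that $G_3(\w,z)=\sum_{n}\F_{3,n}(\w)z^n$ with $\F_{3,n}(\w)=\sum_j\PP_{3,n}(j)\w^j$, so that $E[\X_{3;k_1,k_2,k_3}^{(n)}]=\sum_j j\,\PP_{3,n}(j)=\F_{3,n}'(1)=[z^n]\,\pd_\w G_3(\w,z)\big|_{\w=1}$. The problem therefore splits into two steps: (i) compute $\pd_\w G_3\big|_{\w=1}$ in closed form, and (ii) read off its coefficient of $z^n$.

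For step (i) I would write Eq.~\eqref{E:k1k2k3p} as $G_3=N/D$ with $N=1-p_2z$ (independent of $\w$) and $D=(1-z)(1-p_2z)-(\w-1)\,p_1^{k_1}p_2^{k_2}p_3^{k_3}z^{k}$, so that $\pd_\w D=-p_1^{k_1}p_2^{k_2}p_3^{k_3}z^{k}$. The quotient rule gives $\pd_\w G_3 = N\,p_1^{k_1}p_2^{k_2}p_3^{k_3}z^{k}/D^2$; setting $\w=1$ turns $D$ into $(1-z)(1-p_2z)$, one factor $(1-p_2z)$ cancels with $N$, and we are left with $\pd_\w G_3\big|_{\w=1}=p_1^{k_1}p_2^{k_2}p_3^{k_3}z^{k}\big/\big[(1-p_2z)(1-z)^2\big]$, a rational function of $z$ alone.

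Step (ii) is then a routine coefficient extraction: $[z^n]\,z^{k}\big/\big[(1-p_2z)(1-z)^2\big]=[z^{n-k}]\,1\big/\big[(1-p_2z)(1-z)^2\big]$. I would expand $1\big/\big[(1-p_2z)(1-z)^2\big]=A/(1-z)^2+B/(1-z)+C/(1-p_2z)$, finding $A=1/(1-p_2)$, $C=p_2^2/(1-p_2)^2$, $B=-p_2/(1-p_2)^2$; since $[z^m](1-z)^{-2}=m+1$, $[z^m](1-z)^{-1}=1$, $[z^m](1-p_2z)^{-1}=p_2^m$, putting $m=n-k$ and combining the $B$‑ and $C$‑terms into $-p_2(1-p_2^{\,n-k+1})/(1-p_2)^2$ produces the bracketed quantity in Eq.~\eqref{E:a3p}; multiplying by $p_1^{k_1}p_2^{k_2}p_3^{k_3}$ completes the proof.

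The argument is almost entirely mechanical, so no step is a serious obstacle; the only place that rewards care is the partial‑fraction bookkeeping in step (ii), where the signs of $B,C$ and the index shift must be tracked so that the geometric contribution emerges as $p_2^{\,n-k+1}$ (not $p_2^{\,n-k}$ or $p_2^{\,n-k+2}$), and where one should note that Eq.~\eqref{E:a3p} is intended for $n\ge k$ — it correctly evaluates to $0$ at $n=k-1$.
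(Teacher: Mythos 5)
Your proposal is correct and follows exactly the paper's route: differentiate $G_3(\w,z)$ in $\w$, set $\w=1$ to get $p_1^{k_1}p_2^{k_2}p_3^{k_3}z^k/[(1-p_2z)(1-z)^2]$, and extract $[z^n]$. The paper leaves the coefficient extraction implicit; your partial-fraction computation (with $A=1/(1-p_2)$, $B=-p_2/(1-p_2)^2$, $C=p_2^2/(1-p_2)^2$) correctly fills in that step and reproduces Eq.~\eqref{E:a3p}.
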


\section{The general case: the distribution of \texorpdfstring{$(k_1, \dots, k_\n)$}{k1,...,kl}} \label{S:gen}

In this section the distribution of $(k_1, \dots, k_\n)$
of arbitrary $\n$ will be derived, based on
the cases of small number $\n$ developed in the previous sections.

First, we will derive the distribution of
the pattern $(k_1, \dots, k_\n)$ at the right end of the sequence
in the following Lemma.
\begin{lemma}[distribution of pattern $(k_1, \dots, k_\n)$ at the right end of the sequence] \label{L:genp}
The generating function $H_\n(z)$ is given by
  \begin{equation} \label{E:H_genp}
  H_\n(z) = \frac{\prod_{i=1}^{\n} p_i^{k_i}  z^k }
  {(1 - z \sum_{i=1}^\n p_i  ) \prod_{i=2}^{\n} (1-p_iz)}, 
\end{equation}
where $k= \sum_{i=1}^\n k_i$ .
\begin{proof}
  We will prove Eq.~\eqref{E:H_genp} by induction on $\n$.
  The base case has been proved in Lemma~\ref{L:k1k2p_p} for $\n=2$.
  Assume Eq.~\eqref{E:H_genp} is true for $\n-1$. 
  By using the following matrix in Eq.~\eqref{E:matrix}
    \[
    \M = 
  \begin{bmatrix}
    1 & 0 & -H_{\n-1}' & -H_{\n-1}' & -H_{\n-1}' \\
    0 & 1 & -H_{\n-1}  & -H_{\n-1}  & -H_{\n-1} u \\
    -g_{\n}' & -g_{\n}' & 1 & 0 & 0  \\
    -g_{\n} & -g_{\n} & 0 & 1 & 0  \\
    0 & 0 & 0 & 0 & 1
  \end{bmatrix},
\]
and
\[
     \bg = [H_{\n-1}', H_{\n-1}, g_{\n}', g_{\n}, g_{\n}]^T,
\]
the generating function $\FS_\n(u, z)$ can be obtained.
Like $\FS_2(u, z)$ in Lemma~\ref{L:k1k2p_p},
$\FS_\n(u, z)$ consists of two parts:
the one which is tracked by the variable $u$ encapsulates the patterns we are looking for,
and the other which does not contain $u$ describes everything else.

Here as in Lemma~\ref{L:k1k2p_p}, the interaction factor $u$
is used to track configurations $H_{\n-1} \cdot g_{\n}$
(the second and fifth elements in $\bg$).
The interaction factors in the last row of $\M$ make sure that
these configurations $H_{\n-1} \cdot g_{\n}$ are at the right end of the sequences,
with no other blocks on the right of the blocks represented
by the last $g_{\n}$ in $\bg$:
\[
  \w_{5,1} = \w_{5,2} = \w_{5,3} = \w_{5,4} = 0.
\]

The generating function $H_\n(z)$ is obtained by extracting the coefficient of $u$ of $\FS_\n(u, z)$ as
\[
  H_\n(z) = [u] \FS_\n(u, z) = \frac{ g_\n H_{\n-1}(z) (1+g_\n'+g_\n)}
  {1-(g_\n' + g_\n)( H_{\n-1}'(z) + H_{\n-1}(z)  )} .
\]
Eq.~\eqref{E:H_genp} is proved by substituting into the above equation the following identities:
\begin{align*}
  g_\n &= \sum_{j=k_\n}^\infty  (p_\n z)^j = \frac{ (p_\n z)^{k_\n}} {1 - p_\n z}, \\
  g_\n + g_\n' &= \frac{ p_\n z} {1 - p_\n z}, \\
  H_{\n-1}'(z) + H_{\n-1}(z) &= \frac{ (p_1 + p_2 + \cdots + p_{\n-1}) z}{1 - (p_1 + p_2 + \cdots + p_{\n-1}) z}.
\end{align*}
The last identity of the sum of $H_{\n-1}(z)$ and $H_{\n-1}'(z)$ is from the definition of the two complementary
generating functions.
\end{proof}
\end{lemma}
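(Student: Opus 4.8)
The plan is to prove Eq.~\eqref{E:H_genp} by induction on $\n$, mirroring the argument already used for the base case $\n=2$ in Lemma~\ref{L:k1k2p_p}. Assuming the formula holds for $\n-1$, I would set up the $5\times5$ matrix $\M$ as displayed: rows and columns $1,2$ carry the compound-pattern blocks $H_{\n-1}'$ and $H_{\n-1}$ (sequences over $\{1,\dots,\n-1\}$ that respectively do not, and do, end with a $(k_1,\dots,k_{\n-1})$ pattern), rows and columns $3,4$ carry the short and long runs of $\n$'s, $g_\n'$ and $g_\n$ from Eq.~\eqref{E:gi}, and the fifth row and column carry a special terminal block of at least $k_\n$ consecutive $\n$'s whose outgoing interactions are all set to zero. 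The interaction variable $u$ is placed in entry $(2,5)$ so that it marks exactly the configuration $H_{\n-1}\cdot g_\n$ sitting at the very right end of the sequence. Because the last row of $\M$ equals $[0,0,0,0,1]$, the determinant $\det\M$ is free of $u$, so $\FS_\n(u,z)=\be\M^{-1}\bg$ is affine in $u$ and ``extracting the coefficient of $u$'' is well defined.

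The next step is to compute $[u]\FS_\n(u,z)$. Rather than inverting $\M$ wholesale, I would solve $\M x=\bg$ directly: the fifth equation gives $x_5=g_\n$ at once, rows $3,4$ express $x_3,x_4$ through the common factor $1+x_1+x_2$, and rows $1,2$ express $x_1,x_2$ through $1+x_3+x_4$, with the single occurrence of $u$ entering only the $x_2$ relation. Setting $S=x_1+x_2$ and eliminating reduces everything to one scalar equation whose solution is a rational function affine in $u$; then $G=\be x=(1+g_\n'+g_\n)S$ plus terms free of $u$, so
\[
  H_\n(z)=[u]\FS_\n(u,z)=\frac{g_\n\,H_{\n-1}(z)\,(1+g_\n'+g_\n)}{1-(g_\n'+g_\n)(H_{\n-1}'+H_{\n-1})},
\]
which is the intermediate identity recorded in the statement.

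Finally I would substitute the three closed forms listed after the intermediate identity, namely $g_\n=(p_\n z)^{k_\n}/(1-p_\n z)$, $g_\n+g_\n'=p_\n z/(1-p_\n z)$ (hence $1+g_\n'+g_\n=1/(1-p_\n z)$), and $H_{\n-1}'+H_{\n-1}=Pz/(1-Pz)$ with $P=\sum_{i=1}^{\n-1}p_i$, together with the inductive hypothesis for $H_{\n-1}(z)$. The one delicate point --- and essentially the only place where the induction genuinely closes --- is the denominator: $1-(g_\n'+g_\n)(H_{\n-1}'+H_{\n-1})$ becomes $\bigl[(1-Pz)(1-p_\n z)-Pp_\n z^2\bigr]$ divided by $(1-Pz)(1-p_\n z)$, and the bracket telescopes to $1-(P+p_\n)z=1-\bigl(\sum_{i=1}^{\n}p_i\bigr)z$. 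After this cancellation the factor $1-Pz$ drops out, one factor $1-p_\n z$ survives and extends $\prod_{i=2}^{\n-1}(1-p_iz)$ to $\prod_{i=2}^{\n}(1-p_iz)$, the numerator collapses to $\prod_{i=1}^{\n}p_i^{k_i}z^{k}$ with $k=\sum_{i=1}^{\n}k_i$, and Eq.~\eqref{E:H_genp} follows. I do not expect any real obstacle beyond this bookkeeping, since the structural content is identical to Lemma~\ref{L:k1k2p_p} with $g_1',g_1$ replaced throughout by $H_{\n-1}',H_{\n-1}$.
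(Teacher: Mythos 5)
Your proposal is correct and follows essentially the same route as the paper: induction on $\n$ with the same $5\times5$ matrix, the same extraction of $[u]\FS_\n(u,z)$ yielding the same intermediate identity, and the same closing substitutions (your telescoping of the denominator to $1-z\sum_{i=1}^{\n}p_i$ is exactly the cancellation the paper relies on implicitly). The only difference is that you spell out a few computational details---solving $\M x=\bg$ directly and noting that $\FS_\n$ is affine in $u$---which the paper leaves to the reader.
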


The i.i.d. version of Lemma~\ref{L:genp},
for independent identical distribution multi-state trials (see Section~\ref{S:iid}),
can be derived from Eq.~\eqref{E:H_genp} as
\begin{corollary}[for i.i.d. distribution of pattern $(k_1, \dots, k_\n)$ at the right end of the sequence]
The generating function $\widetilde{H}_\n(z)$ for i.i.d. trials is given by  
\[
  \widetilde{H}_\n(z) = \frac{z^k}
  {(1-z)^{\n-1} (1-\n z)},
\]
where $k= \sum_{i=1}^\n k_i$ .
\end{corollary}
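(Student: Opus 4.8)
The plan is to obtain $\widetilde{H}_\n(z)$ as a straightforward specialization of Lemma~\ref{L:genp}. The i.i.d.\ case corresponds to setting $p_1 = p_2 = \cdots = p_\n = 1/\n$, but since the i.i.d.\ generating functions count \emph{numbers} of sequences rather than probabilities (see the notation in Section~\ref{SS:notation}), one also needs to rescale the length variable $z$ to absorb the factor $\n$ that each position of the trial contributes. Concretely, I would start from Eq.~\eqref{E:H_genp} and substitute $p_i = 1/\n$ for all $i$, then replace $z$ by $\n z$ so that $z^n$ again marks a sequence of length $n$ (each of the $n$ positions carrying a weight of $\n$ choices).

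First I would carry out the substitution $p_i \to 1/\n$ in Eq.~\eqref{E:H_genp}. The numerator $\prod_{i=1}^\n p_i^{k_i} z^k$ becomes $(1/\n)^{\sum k_i} z^k = \n^{-k} z^k$. In the denominator, $1 - z\sum_{i=1}^\n p_i = 1 - z$, and each factor $1 - p_i z = 1 - z/\n$ for $i = 2, \dots, \n$, giving $(1-z)\,(1 - z/\n)^{\n-1}$. Next I would perform the rescaling $z \mapsto \n z$: the numerator becomes $\n^{-k}(\n z)^k = z^k$, the factor $1 - z$ becomes $1 - \n z$, and each factor $1 - z/\n$ becomes $1 - (\n z)/\n = 1 - z$, so $(1 - z/\n)^{\n-1}$ becomes $(1-z)^{\n-1}$. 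Assembling the pieces yields
\[
  \widetilde{H}_\n(z) = \frac{z^k}{(1-z)^{\n-1}(1 - \n z)},
\]
which is exactly the claimed formula.

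The one point that deserves care—and the closest thing to an obstacle—is justifying the $z \mapsto \n z$ rescaling rigorously: one must confirm that the convention relating $H_\n(z)$ (a probability generating function in which each position carries total weight $\sum p_i = 1$) to $\widetilde{H}_\n(z)$ (an enumeration generating function in which each position carries total weight $\n$) is precisely $\widetilde{H}_\n(z) = H_\n(z)\big|_{p_i = 1/\n,\ z \to \n z}$. This follows from the fact that setting $p_i = 1/\n$ and then multiplying the length-$n$ coefficient by $\n^n$ (equivalently, substituting $z \to \n z$) converts the probability $\PP_{\n,n}(m)$ into the count $\A_{\n,n}(m)$, in accordance with the definitions $\sum_j \PP_{\n,n}(j) = 1$ and $\sum_j \A_{\n,n}(j) = \n^n$ given in Section~\ref{SS:notation}, and the same correspondence applies to the ``right-end'' restricted generating functions since the restriction is position-by-position independent of this reweighting. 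Everything else is routine algebra.
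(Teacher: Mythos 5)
Your proposal is correct and is essentially identical to the paper's own proof, which simply sets $p_i = 1/\ell$ and substitutes $z \mapsto \ell z$ in Eq.~\eqref{E:H_genp}; you merely spell out the routine algebra and the probability-to-count reweighting that the paper leaves implicit.
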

  \begin{proof}
    Let $p_i = 1/\n$ for $i=1, 2, \dots, \n$ and substitute $z$ by $\n z$ in Eq.~\eqref{E:H_genp}.
    \end{proof}

By using Lemma~\ref{L:genp}, the distribution of $(k_1, \dots, k_\n)$ for the arbitrary $\n$ can be obtained,
    which is stated in the following Theorem.
\begin{theorem}[distribution of $(k_1, \dots, k_\n)$] \label{Th:genp}
  The double generating function for the distribution of $(k_1, \dots, k_\n)$
  is given by
\begin{equation} \label{E:genp}
  G_\n(\w, z) = \frac{ \prod_{i=2}^{\n-1} (1-p_i z)    }
  {(1-z) \prod_{i=2}^{\n-1} (1-p_iz)   - (w-1) \prod_{i=1}^{\n} p_i^{k_i}   z^{k}},
\end{equation}
where $k= \sum_{i=1}^\n k_i$.
\begin{proof}
  By using the following matrix in Eq.~\eqref{E:matrix}
    \[
    \M = 
  \begin{bmatrix}
    1 & 0 & -H_{\n-1}' & -H_{\n-1}' \\
    0 & 1 & -H_{\n-1}  & -H_{\n-1} \w \\
    -g_\n' & -g_\n' & 1 & 0 \\
    -g_\n & -g_\n & 0 & 1
  \end{bmatrix},
\]
and
\[
  \bg = [H_{\n-1}', H_{\n-1}, g_{\n}', g_{\n}]^T,
\]
we obtain
  \begin{align*}
    G_\n(\w, z) &= \frac{(1 + g_\n + g_\n') (1 + H_{\n-1}' + H_{\n-1} )}
          {1 - g_\n' (H_{\n-1}' + H_{\n-1}) + g_\n (H_{\n-1}' + \w H_{\n-1})} \\
    &= \frac{(1 + g_\n + g_\n') (1 + H_{\n-1}' + H_{\n-1} )}
          {1 - (g_\n' + g_\n)(H_{\n-1}' + H_{\n-1}) - g_\n H_{\n-1} (\w - 1) } . 
  \end{align*}
  Eq.~\eqref{E:genp} is obtained by
  substituting
  \begin{align*}
    g_\n &= \frac{ (p_\n z)^{k_\n}} {1 - p_\n z}, \\
    g_\n + g_\n' &= \frac{ p_\n z} {1 - p_\n z}, \\
    H_{\n-1} &=   \frac{\prod_{i=1}^{\n-1} p_i^{k_i}  z^k }
              {(1 - z \sum_{i=1}^{\n-1} p_i  ) \prod_{i=2}^{\n-1} (1-p_iz)},  \\
    H_{\n-1} + H_{\n-1}' &= \frac{ (p_1 + p_2 + \cdots + p_{\n-1}) z}{1 - (p_1 + p_2 + \cdots + p_{\n-1}) z}
  \end{align*}
  into the above equation and using $\sum_{i=1}^\n p_i  = 1$.
\end{proof}
\end{theorem}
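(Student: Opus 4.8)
The plan is to use the two-step matrix method of Eq.~\eqref{E:matrix} one more time, feeding in the right-end generating functions $H_{\n-1}(z)$ and $H_{\n-1}'(z)$ from Lemma~\ref{L:genp} together with the single-run generating functions $g_\n'$, $g_\n$ from Eq.~\eqref{E:gi}. This is exactly the same construction that produced Proposition~\ref{P:k1k2k3p} in the case $\n=3$, now with the known closed form for $H_{\n-1}$ in hand.

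First I would set up the $4\times 4$ matrix $\M$ with rows/columns indexed by the four blocks $H_{\n-1}'$, $H_{\n-1}$, $g_\n'$, $g_\n$, placing the tracking variable $\w$ in the $(2,4)$ entry so that it marks precisely the adjacencies ``a block ending in the pattern $(k_1,\dots,k_{\n-1})$'' followed immediately by ``a run of at least $k_\n$ copies of $\n$'' --- which by the definitions of $H_{\n-1}$ and $g_\n$ are exactly the $(k_1,\dots,k_\n)$ patterns. The interaction factors $\w_{1,2}=\w_{2,1}=0$ and $\w_{3,4}=\w_{4,3}=0$ are set to zero to forbid ambiguous $H_{\n-1}'\!\cdot H_{\n-1}$ and $g_\n'\!\cdot g_\n$ adjacencies, so that each pattern is counted exactly once. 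Evaluating $\be\M^{-1}\bg$ then gives
\begin{equation*}
  G_\n(\w,z) = \frac{(1+g_\n+g_\n')(1+H_{\n-1}'+H_{\n-1})}
  {1-(g_\n'+g_\n)(H_{\n-1}'+H_{\n-1}) - g_\n H_{\n-1}(\w-1)},
\end{equation*}
which is a purely mechanical $4\times4$ determinant/inverse computation (or one can cite the analogous reduction already carried out in Proposition~\ref{P:k1k2k3p} and Lemma~\ref{L:genp}).

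Next I would substitute the four explicit expressions: $g_\n=(p_\n z)^{k_\n}/(1-p_\n z)$, $g_\n+g_\n'=p_\n z/(1-p_\n z)$, the inductive formula $H_{\n-1}=\prod_{i=1}^{\n-1}p_i^{k_i}z^k\big/\big[(1-z\sum_{i=1}^{\n-1}p_i)\prod_{i=2}^{\n-1}(1-p_iz)\big]$, and the complementarity identity $H_{\n-1}+H_{\n-1}'=(\sum_{i=1}^{\n-1}p_i)z\big/(1-(\sum_{i=1}^{\n-1}p_i)z)$. Clearing the nested denominators and using $\sum_{i=1}^{\n}p_i=1$ (so that $1-z\sum_{i=1}^{\n-1}p_i=1-(1-p_\n)z$, and the factor $(1-p_\n z)$ coming from $g_\n$ combines with $(1-(1-p_\n)z)$ to produce the $(1-z)$ in the denominator after simplification) should collapse everything to Eq.~\eqref{E:genp}. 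I expect the only real work to be this algebraic simplification --- keeping track of which linear factors cancel against which --- and the main obstacle is purely bookkeeping: verifying that after multiplying through by $(1-p_\n z)(1-(1-p_\n)z)\prod_{i=2}^{\n-1}(1-p_iz)$ the numerator reduces to $\prod_{i=2}^{\n-1}(1-p_iz)$ and the $\w$-dependent term reduces to $-(\w-1)\prod_{i=1}^{\n}p_i^{k_i}z^k$. There is no combinatorial subtlety left at this stage, since all of it was absorbed into the validity of the matrix method and into the inductive hypothesis for $H_{\n-1}$; one should, however, double-check the edge cases $\n=2$ and $\n=3$ against Eq.~\eqref{E:k1k2p} and Eq.~\eqref{E:k1k2k3p} to make sure the empty product $\prod_{i=2}^{1}$ and the product $\prod_{i=2}^{2}$ are interpreted correctly.
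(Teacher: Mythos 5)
Your proposal is correct and follows essentially the same route as the paper's own proof: the same $4\times 4$ matrix built from $H_{\n-1}'$, $H_{\n-1}$, $g_\n'$, $g_\n$ with $\w$ in the $(2,4)$ entry, the same intermediate expression for $G_\n(\w,z)$, and the same final substitution and simplification using $\sum_{i=1}^{\n}p_i=1$. No gaps; the added sanity checks against the $\n=2,3$ cases are a reasonable bonus but not needed.
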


The i.i.d. version for independent identical distribution trials is stated in the following Corollary.
\begin{corollary}[for i.i.d. distribution of $(k_1, \dots, k_\n)$]
  The double generating function $\widetilde{G}_\n(\w, z)$ for i.i.d. trials is given by
  \begin{equation*} \label{E:gen}
    \widetilde{G}_\n(\w, z) = \frac{(1-z)^{\n-2}}{(1-z)^{\n-2}(1-\n z) - (w-1) z^{k}},
  \end{equation*}
  \begin{proof}
    Let $p_i = 1/\n$ for $i=1, 2, \dots, \n$ and substitute $z$ by $\n z$ in Eq.~\eqref{E:genp}.
    \end{proof}
\end{corollary}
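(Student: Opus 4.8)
The plan is to obtain the corollary as a direct specialization of Theorem~\ref{Th:genp}, so the only real work is (i) justifying why the prescription ``set $p_i=1/\n$ and replace $z$ by $\n z$'' correctly converts the probability \gf{} of Eq.~\eqref{E:genp} into the i.i.d.\ counting \gf{} $\widetilde{G}_\n$, and (ii) carrying that substitution through each factor. The conceptual content is in (i); step (ii) is pure bookkeeping.

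For (i), I would first relate the two sequence statistics at the coefficient level. When $p_1=\cdots=p_\n=1/\n$, every one of the $\n^n$ sequences of length $n$ carries probability $\n^{-n}$, so $\PP_{\n,n}(m)=\n^{-n}\A_{\n,n}(m)$ for each $m$. Multiplying by $\w^m$ and summing over $m$ gives $\F_{\n,n}(\w)\big|_{p_i=1/\n}=\n^{-n}\Fiid_{\n,n}(\w)$. Passing to the double \gf{}s,
\[
G_\n(\w,z)\big|_{p_i=1/\n}=\sum_{n\ge0}\n^{-n}\Fiid_{\n,n}(\w)\,z^n=\widetilde{G}_\n\!\left(\w,\tfrac{z}{\n}\right),
\]
so that $\widetilde{G}_\n(\w,z)=G_\n(\w,\n z)\big|_{p_i=1/\n}$. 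This is exactly the prescription stated in the proof, and it reduces the corollary to a substitution in Eq.~\eqref{E:genp}.

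For (ii), under $p_i\mapsto1/\n$ together with $z\mapsto\n z$ every factor transforms as $1-p_i z\mapsto 1-z$, so the product $\prod_{i=2}^{\n-1}(1-p_i z)$, which has $\n-2$ factors, becomes $(1-z)^{\n-2}$ in both numerator and denominator. The distinguished standalone factor $(1-z)$ in the denominator---the one equal to $1-z\sum_{i=1}^\n p_i$ after using $\sum_{i=1}^\n p_i=1$, and hence independent of the $p_i$---instead becomes $1-\n z$. Finally the weight collapses as $\prod_{i=1}^\n p_i^{k_i}\,z^k\mapsto \n^{-k}(\n z)^k=z^k$, since $k=\sum_{i=1}^\n k_i$. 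Assembling the three pieces yields $\widetilde{G}_\n(\w,z)=(1-z)^{\n-2}/\bigl[(1-z)^{\n-2}(1-\n z)-(w-1)z^k\bigr]$, as claimed.

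The step I expect to require the most care is keeping the two kinds of $(1-z)$-type factors distinct in (ii): the $\n-2$ factors arising from $\prod_{i=2}^{\n-1}(1-p_iz)$ all become $(1-z)$, whereas the single leading factor becomes $(1-\n z)$; miscounting these is essentially the only way the substitution can go wrong. There is no genuine analytic obstacle---the substantive content lies entirely in the coefficient-level identity $\PP_{\n,n}(m)=\n^{-n}\A_{\n,n}(m)$ that underwrites the rescaling in (i).
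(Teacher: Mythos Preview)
Your proposal is correct and follows the same approach as the paper, which simply instructs to set $p_i=1/\n$ and replace $z$ by $\n z$ in Eq.~\eqref{E:genp}. You have added the explicit justification (your step~(i), the identity $\PP_{\n,n}(m)=\n^{-n}\A_{\n,n}(m)$ and the resulting relation $\widetilde{G}_\n(\w,z)=G_\n(\w,\n z)\big|_{p_i=1/\n}$) that the paper leaves implicit, and carried the substitution through more carefully than the one-line proof in the paper; in particular your remark distinguishing the factor $1-z=1-z\sum_i p_i$ (which becomes $1-\n z$) from the $\n-2$ factors $1-p_iz$ (which become $1-z$) is exactly the point to watch.
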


From Eq.~\eqref{E:genp} of Theorem~\ref{Th:genp} it is evident that for any given $\n$,
the double \gf{} $G_\n(\w, z)$ is a rational function.
It is well known that any property that is described by a rational
\gf{} satisfies the recurrence relation determined by the denominator of the rational
\gf{}.
If the denominator of  $G_\n(\w, z)$ in Eq.~\eqref{E:genp} is written as
\[
  (1-z) \prod_{i=2}^{\n-1} (1-p_iz)   - (w-1) \prod_{i=1}^{\n} p_i^{k_i}   z^{k}
  =  \B_\n(z) - (w-1) \prod_{i=1}^{\n} p_i^{k_i}   z^{k} ,
\]
whose first product is expanded as
\begin{equation} \label{E:expansion}
  \B_\n(z) = (1-z) \prod_{i=2}^{\n-1} (1-p_iz) = 1 - a_1 z - a_2 z^2 - \dots - (-1)^{\n} \prod_{i=2}^{\n-1} p_i z^{\n-1},
\end{equation}
then we have the following Corollary for the recurrence relation satisfied by $\F_{\n,n}(\w)$:
\begin{corollary}[recurrence relation of $\F_{\n,n}(\w)$] \label{C:recur}
  When $0 \le n < k=\sum_{i=1}^{\n} k_i$,
  \begin{equation*} 
    \F_{\n,n}(\w) = 1,
  \end{equation*}
  otherwise
\begin{multline} \label{E:recur}
  \F_{\n,n}(\w) =  a_1 \F_{\n,n-1}(\w)
  + a_2  \F_{\n,n-2}(\w)
  + \dots  
  + (-1)^{\n} \prod_{i=2}^{\n-1} p_i \F_{\n, n-\n+1}(\w) \\
  + (\w-1) \prod_{i=1}^{\n} p_i^{k_i} \F_{\n, n-k}(\w) .
\end{multline}
\end{corollary}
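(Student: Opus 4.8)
The plan is to derive Corollary~\ref{C:recur} directly from the rational form of the double generating function $G_\n(\w, z)$ given in Theorem~\ref{Th:genp}, using the standard fact that a sequence whose generating function is rational satisfies the linear recurrence dictated by the denominator. First I would fix the interpretation: treating $\w$ as a formal parameter, $G_\n(\w, z) = \sum_{n\ge 0} \F_{\n,n}(\w) z^n$ is, for each value of $\w$, a rational function of $z$ of the shape $N(z)/D(z)$ where the numerator $N(z) = \prod_{i=2}^{\n-1}(1-p_i z)$ has degree $\n-2$ in $z$ and the denominator is $D(z) = \B_\n(z) - (\w-1)\prod_{i=1}^\n p_i^{k_i} z^k$. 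The key observation is that $D(z)$, when expanded via \eqref{E:expansion}, has the explicit form $1 - a_1 z - \cdots - (-1)^{\n}\prod_{i=2}^{\n-1} p_i\, z^{\n-1} - (\w-1)\prod_{i=1}^\n p_i^{k_i} z^k$, i.e. its constant term is $1$.

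Next I would carry out the routine multiplication: from $N(z) = D(z)\, G_\n(\w,z)$, comparing the coefficient of $z^n$ on both sides gives, for $n$ large enough that the numerator no longer contributes (i.e. $n > \n-2$, and since $k = \sum k_i \ge \n$ this is subsumed by $n \ge k$ as long as each $k_i\ge 1$), the homogeneous relation
\[
  \F_{\n,n}(\w) = a_1 \F_{\n,n-1}(\w) + \cdots + (-1)^{\n}\prod_{i=2}^{\n-1} p_i\, \F_{\n,n-\n+1}(\w) + (\w-1)\prod_{i=1}^{\n} p_i^{k_i}\, \F_{\n,n-k}(\w),
\]
which is exactly \eqref{E:recur}. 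For the initial segment $0 \le n < k$, I would argue combinatorially (or by expanding $G_\n$ as a series) that no $(k_1,\dots,k_\n)$ pattern can fit in a sequence of length less than $k$, so $\X_{\n;k_1,\dots,k_\n}^{(n)} = 0$ with probability one, whence $\PP_{\n,n}(0)=1$ and $\F_{\n,n}(\w)=1$; alternatively, since $\F_{\n,n}(1)=1$ always and the $\w$-dependence enters $G_\n$ only through the term $(\w-1)\prod p_i^{k_i} z^k$, the coefficient of $z^n$ for $n<k$ is independent of $\w$ and hence equals its value at $\w=1$, namely $1$.

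There is one genuine subtlety to handle with care rather than a hard obstacle: one must check that the coefficient comparison in $N(z) = D(z)\,G_\n(\w,z)$ really does kill the numerator for all $n$ at which \eqref{E:recur} is claimed, and that the ``boundary'' indices $n-1,\dots,n-\n+1,n-k$ appearing on the right side never fall below $0$ when $n\ge k$; the latter holds precisely because $k\ge \n-1$ (indeed $k\ge\n$), so $n-k\ge 0$ implies $n-\n+1\ge 0$. I would also remark that the stated piecewise description is consistent at the seam: for $n<k$ the formula \eqref{E:recur} would reference $\F_{\n,n-k}$ with negative index, which is why the homogeneous-plus-pattern recurrence is only asserted for $n\ge k$, and that the degenerate small-$\n$ cases ($\n=2$, where the product $\prod_{i=2}^{\n-1}$ is empty) are covered since then $\B_\n(z)=1-z$ and the recurrence reduces to $\F_{2,n}(\w) = \F_{2,n-1}(\w) + (\w-1)p_1^{k_1}p_2^{k_2}\F_{2,n-k}(\w)$, matching \eqref{E:k1k2p}. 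The main work is bookkeeping of indices and the empty-product conventions; there is no deep difficulty once the rational-function/recurrence dictionary is invoked.
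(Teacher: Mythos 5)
Your proposal is correct and follows essentially the same route as the paper: the paper simply invokes the standard fact that a rational generating function's coefficients satisfy the linear recurrence determined by its denominator, written in the form $\B_\n(z) - (\w-1)\prod_{i=1}^{\n}p_i^{k_i}z^k$ via the expansion~\eqref{E:expansion}, which is exactly the coefficient-comparison argument you carry out in detail (including the correct observations that $k=\sum k_i\ge\n>\n-2$ kills the numerator contribution for $n\ge k$, and that $\F_{\n,n}(\w)=1$ for $n<k$ because the $\w$-dependence enters only at order $z^k$).
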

It's straightforward to write down the explicit recurrence relation of $\F_{\n, n}(\w)$
as shown in Eq.~\eqref{E:recur}
for any particular $\n$
using the expansion in Eq.~\eqref{E:expansion}.
For examples, the recurrence relations for $\n=2$, $\n=3$, and $\n=4$ are listed below.
When $\n=2$, $\B_2(z) = 1 - z$, hence $a_1=1$ and $a_i=0$ for $i>1$.
The recurrence for $\F_{2, n}(\w)$ is given by
\[
  \F_{2, n}(\w) = \F_{2, n-1}(\w) + (\w-1) p_1^{k_1}p_2^{k_2} \F_{2, n-k}(\w) , \quad n \ge k.
\]
This relation is the same as the Theorem 3.1 of ~\citep{DAFNIS20101691}
(The $\Phi$ on the LHS of that equation is a misprint, which should be $\phi$).

When $\n=3$,
\[
  \B_3(z) = (1-z)  (1-p_2z) = 1 - (1 + p_2) z + p_2 z^2,
\]
and $\F_{3, n}(\w)$ satisfies the following recurrence relation when $n \ge k$
\[
  \F_{3, n}(\w) = (1 + p_2) \F_{3, n-1}(\w) - p_2 \F_{3, n-2}(\w) + (\w-1)\prod_{i=1}^{3} p_i^{k_i} \F_{3, n-k}(\w) .
\]
When $\n=4$,
\[
  \B_4(z) = (1-z)  (1-p_2z)(1-p_3z) = 1 - (1 + p_2 + p_3) z + (p_2 +p_3 +p_2p_3)z^2 - p_2p_3 z^3,
\]
hence $\F_{4, n}(\w)$ satisfies the following recurrence relation when $n \ge k$
\begin{multline*}
  \F_{4, n}(\w) = (1 + p_2 + p_3) \F_{4, n-1}(\w)
  - (p_2 +p_3 +p_2p_3) \F_{4, n-2}(\w)
  + p_2p_3 \F_{4, n-3}(\w) \\
  + (\w-1)\prod_{i=1}^{4} p_i^{k_i} \F_{4, n-k}(\w) .
\end{multline*}

From Theorem~\ref{Th:genp} the average of $\X_{\n; k_1, \dots, k_\n}^{(n)}$ and other statistics can be obtained.
The following Corollary gives the principal part of the average of $\X_{\n; k_1, \dots, k_\n}^{(n)}$
for a sequence of length $n$. It shows that as the length goes to infinity, the average depends
linearly on the length $n$.
\begin{corollary}[average of $\X_{\n; k_1, \dots, k_\n}^{(n)}$]
  The average of $\X_{\n; k_1, \dots, k_\n}^{(n)}$
for a sequence of length $n$ is given by
  \begin{equation} \label{E:agen}
  E[ \X_{\n; k_1, \dots, k_\n}^{(n)}] = 
    \frac{ \prod_{i=1}^{\n} p_i^{k_i}  (n-k+1)  }{\prod_{i=2}^{\n-1} (1-p_i)} + O(1).
  \end{equation}
  \begin{proof}
    Take the derivative of Eq.~\eqref{E:genp} with respect to $\w$, and set $\w=1$:
\[
  \frac{ \pd G_\n(\w, z)} {\pd \w} \Big |_{\w=1} = \frac{\prod_{i=1}^{\n} p_i^{k_i} z^k}
  {(1-z)^2 \prod_{i=2}^{\n-1} (1-p_iz) } .    
\]
Eq.~\eqref{E:agen} is obtained by extracting the coefficient of $z$ from the above equation.
  \end{proof}
\end{corollary}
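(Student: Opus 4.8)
The plan is to recover the mean as the first $\w$-derivative of the probability \gf{} at $\w=1$, and then to read the whole sequence of means off the double generating function $G_\n(\w,z)$ of Theorem~\ref{Th:genp}. Since $\F_{\n,n}(\w)=\sum_j \PP_{\n,n}(j)\w^j$ is the probability \gf{} of $\X_{\n; k_1, \dots, k_\n}^{(n)}$, differentiating once and setting $\w=1$ gives $E[\X_{\n; k_1, \dots, k_\n}^{(n)}]=\sum_j j\,\PP_{\n,n}(j)=\pd_\w \F_{\n,n}(\w)\big|_{\w=1}$. Because $G_\n(\w,z)=\sum_n \F_{\n,n}(\w)z^n$, interchanging $\pd_\w$ with the sum yields $\pd_\w G_\n(\w,z)\big|_{\w=1}=\sum_n E[\X_{\n; k_1, \dots, k_\n}^{(n)}]\,z^n$, so every mean is a single coefficient of this one rational function, and the task reduces to a differentiation followed by a coefficient extraction.

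First I would perform the differentiation directly on the closed form~\eqref{E:genp}. Writing $N(z)=\prod_{i=2}^{\n-1}(1-p_i z)$ for the ($\w$-independent) numerator and $D(\w,z)=(1-z)N(z)-(\w-1)\prod_{i=1}^{\n}p_i^{k_i}z^{k}$ for the denominator, the quotient rule gives $\pd_\w G_\n = -N\,\pd_\w D/D^2 = N\prod_{i=1}^{\n}p_i^{k_i}z^{k}/D^2$, since $\pd_\w D=-\prod_{i=1}^{\n}p_i^{k_i}z^k$. Setting $\w=1$ collapses $D$ to $(1-z)N$, one factor of $N$ cancels, and the result is exactly the displayed expression $\prod_{i=1}^{\n}p_i^{k_i}z^k/\big[(1-z)^2\prod_{i=2}^{\n-1}(1-p_i z)\big]$.

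The substance of the proof is the coefficient extraction. The factor $z^k$ only shifts the index, so $E[\X_{\n; k_1, \dots, k_\n}^{(n)}]=\prod_{i=1}^{\n}p_i^{k_i}\cdot[z^{n-k}]\big\{(1-z)^{-2}\prod_{i=2}^{\n-1}(1-p_iz)^{-1}\big\}$. I would resolve the bracketed function by partial fractions: it has a double pole at $z=1$ and simple poles at $z=1/p_i$ for $i=2,\dots,\n-1$. The double pole at $z=1$ is the dominant singularity and carries the linear-in-$n$ growth; its $(1-z)^{-2}$ coefficient is the constant $1/\prod_{i=2}^{\n-1}(1-p_i)$, obtained by multiplying through by $(1-z)^2$ and evaluating at $z=1$, and $[z^{m}](1-z)^{-2}=m+1$ then contributes the leading term $(n-k+1)/\prod_{i=2}^{\n-1}(1-p_i)$.

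The main obstacle is to argue that everything else is $O(1)$. The $(1-z)^{-1}$ part of the double-pole expansion contributes a bounded constant, while each simple pole at $z=1/p_i$ contributes a term proportional to $p_i^{\,n-k}$; since $0<p_i<1$ these decay geometrically and are swallowed by the $O(1)$ error, leaving precisely~\eqref{E:agen}. Two checks reassure me that the bookkeeping is correct: for $\n=2$ the products $\prod_{i=2}^{\n-1}$ are empty, the bracketed function is just $(1-z)^{-2}$, and the mean is exactly $(n-k+1)\prod_{i=1}^{2}p_i^{k_i}$ with vanishing error; and for $\n=3$ the formula reproduces the leading term $\prod_{i=1}^{3}p_i^{k_i}(n-k+1)/(1-p_2)$ of the exact mean in Corollary~\ref{C:a3p}, whose remaining $p_2^{\,n-k+1}$-terms are indeed bounded.
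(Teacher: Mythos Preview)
Your proof is correct and follows the same approach as the paper: differentiate $G_\ell(\w,z)$ with respect to $\w$, set $\w=1$, and extract the $z^n$ coefficient. You simply supply the details the paper omits---the explicit quotient-rule computation and the partial-fraction argument isolating the $(1-z)^{-2}$ contribution---so your write-up is a fleshed-out version of the paper's two-line sketch rather than a different route.
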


\section{Number of sequences containing \texorpdfstring{$(k_1, \dots, k_\n)$}{k1,...,kl} patterns: the i.i.d. trials } \label{S:iid}
In previous sections the generating functions obtained are for non-iid trials
where  probability generating functions
\[
  G_\n(\w, z) = \sum_{n=0}^\infty \sum_{j=0} \PP_{\n,n}( j ) \w^j z^n
\]
are obtained,
where $\PP_{\n,n}( m  )$ is the probability for the sequences of length $n$ to contain $m$ $(k_1, \dots, k_\n)$ patterns.
If we are interested in the \emph{numbers} of sequences that
contain $(k_1, k_2, \dots, k_\n)$ patterns, assuming independent identical distribution (i.i.d.)
for the objects,
the following generating function will be obtained
\[
  \Giid_\n(\w, z) = \sum_{n=0}^\infty \sum_{j=0} \A_{\n,n}( j) \w^j z^n  ,
\]
where $A_{\n,n}(m)$ is the number of sequences with $m$ $(k_1, \dots, k_\n)$ patterns, for trial sequences of length $n$.

Below we list the results for the i.i.d. trials with proofs omitted.
The proofs for the following results are similar to those in the previous sections,
with the generating functions $g_i$ and  $g_i'$ replaced by
\begin{align*}
  g_i' &= \sum_{j=1}^{k_i - 1} z^j =  \frac{z - z^{k_i}}{1-z},  \\
  g_i  &= \sum_{j=k_i}^\infty z^j = \frac{z^{k_i}}{1-z} .
\end{align*}
Alternatively, these results can be derived from the corresponding non-iid results
by substituting $p_i$ by $1/\n$ for $i=1, \dots, \n$, and $z$ by $\n z$.

\begin{proposition}[distribution of $(k_1, k_2)$] \label{P:k1k2}
  The number of sequences of length $n$ with $m$ $(k_1, k_2)$ patterns
  is given by $\A_{2,n}( m) = [\w^m z^n] \Giid_2(w, z) $,
where
\begin{equation} \label{E:k1k2}
  \Giid_2(\w, z) = \frac{1}{1-2z - (\w-1) z^{k_1+k_2}} .
\end{equation}
\end{proposition}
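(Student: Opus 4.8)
The plan is to derive Eq.~\eqref{E:k1k2} as the i.i.d.\ specialization of Proposition~\ref{P:k1k2p}, which reduces the entire statement to a one-line substitution; I would also indicate the parallel self-contained derivation through the matrix formula~\eqref{E:matrix}.

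First I would record the bookkeeping identity that legitimizes the substitution. When $p_1=\cdots=p_\n=1/\n$ every length-$n$ sequence has probability $\n^{-n}$, so $\PP_{\n,n}(m)=\n^{-n}\A_{\n,n}(m)$ and hence
\[
  \Giid_\n(\w,z)=\sum_{n\ge 0}\sum_{j}\A_{\n,n}(j)\,\w^j z^n
  =\sum_{n\ge 0}\sum_{j}\PP_{\n,n}(j)\,\w^j(\n z)^n
  =G_\n(\w,\n z)\big|_{p_i=1/\n}.
\]
This is precisely the ``$p_i\to 1/\n$, $z\to\n z$'' recipe noted above the Proposition. Specializing to $\n=2$ and starting from
\[
  G_2(\w,z)=\frac{1}{1-z-(\w-1)p_1^{k_1}p_2^{k_2}z^{k_1+k_2}},
\]
I would set $p_1=p_2=1/2$ and replace $z$ by $2z$: the linear term becomes $1-2z$, and in the nonlinear term the powers of two cancel, $(1/2)^{k_1}(1/2)^{k_2}(2z)^{k_1+k_2}=z^{k_1+k_2}$, giving $\Giid_2(\w,z)=1/\bigl(1-2z-(\w-1)z^{k_1+k_2}\bigr)$; then $\A_{2,n}(m)=[\w^m z^n]\Giid_2(\w,z)$ by definition.

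For completeness I would also note the direct route, which mirrors the proof of Proposition~\ref{P:k1k2p} verbatim: use the same $4\times4$ matrix $\M$ with $\w_{2,4}=\w$ and $\w_{1,2}=\w_{2,1}=\w_{3,4}=\w_{4,3}=0$, the same vector $\bg=[g_1',g_1,g_2',g_2]^T$, and compute $\Giid_2=\be\,\M^{-1}\bg$ --- the only change being that the block generating functions are now $g_i'=\sum_{j=1}^{k_i-1}z^j=(z-z^{k_i})/(1-z)$ and $g_i=\sum_{j\ge k_i}z^j=z^{k_i}/(1-z)$, since in the unweighted count each letter contributes $z$ rather than $p_i z$. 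The block decomposition into short and long maximal runs of $1$'s and $2$'s, and the reason the forbidden adjacencies remove ambiguity in what $\w$ counts, are unchanged. There is essentially no obstacle here: the computation is routine rational-function algebra, and the only point meriting a moment's care is the exact cancellation of the factors $2^{k_1+k_2}$ in the substitution route (equivalently, checking in the direct route that the combinatorial meaning of $\M$ and $\bg$ is unaffected by replacing probabilities with unit weights).
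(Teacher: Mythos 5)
Your proposal is correct and matches exactly the two routes the paper itself indicates for this result (whose proof it omits): the substitution $p_i\to 1/\n$, $z\to\n z$ applied to Proposition~\ref{P:k1k2p}, justified by the identity $\A_{\n,n}(m)=\n^{n}\PP_{\n,n}(m)$ in the i.i.d.\ case, and the parallel matrix computation with unit-weight block generating functions. The algebraic check that the factors $2^{k_1+k_2}$ cancel is right, so nothing further is needed.
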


\begin{lemma}[$\n=2$, with  pattern $(k_1, k_2)$ at the right end] \label{L:k1k2p}
  The generating functions for the number of sequences with
  pattern $(k_1, k_2)$ at the right end
  and its complement
  are given by
\begin{align*} 
  \widetilde{H}_2(z)  &= \frac{z^{k_1+k_2}}{(1-z)(1-2z)} , \\
  \widetilde{H}_2'(z) &= \frac{2z(1-z) - z^{k_1+k_2}}{(1-z)(1-2z)} .
\end{align*}
\end{lemma}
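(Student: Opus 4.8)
The plan is to derive $\widetilde{H}_2(z)$ and $\widetilde{H}_2'(z)$ from the non-i.i.d.\ formulas of Lemma~\ref{L:k1k2p_p} by the specialization $p_1=p_2=1/2$, $z\mapsto 2z$, and to record the direct matrix computation as an independent check. I would make the specialization the main argument, since it is the shortest.

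For the specialization, the key bookkeeping fact is that a probability-weighted double generating function $\sum_n\sum_j \PP_{\n,n}(j)\w^j z^n$, with $\PP_{\n,n}(j)=\A_{\n,n}(j)/\n^n$ when all $p_i=1/\n$, becomes the counting generating function $\sum_n\sum_j \A_{\n,n}(j)\w^j z^n$ exactly under $z\mapsto \n z$; the same substitution carries $H_2(z)$ and $H_2'(z)$ --- the $u$-linear and $u$-free parts of $\FS_2(u,z)$ in Lemma~\ref{L:k1k2p_p} --- to their i.i.d.\ counterparts. So I would take Eq.~\eqref{E:k1k2p_p} and Eq.~\eqref{E:k1k2pp_p}, set $p_1=p_2=1/2$, and replace $z$ by $2z$. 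Then $p_1^{k_1}p_2^{k_2}z^{k_1+k_2}$ collapses to $(1/2)^{k_1+k_2}(2z)^{k_1+k_2}=z^{k_1+k_2}$, the factor $1-p_2z$ becomes $1-z$, the factor $1-(p_1+p_2)z$ becomes $1-2z$, and the numerator piece $z(1-p_2z)(p_1+p_2)$ of $H_2'$ becomes $2z(1-z)$; collecting terms yields $\widetilde{H}_2(z)=z^{k_1+k_2}/[(1-z)(1-2z)]$ and $\widetilde{H}_2'(z)=[2z(1-z)-z^{k_1+k_2}]/[(1-z)(1-2z)]$.

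For the matrix route, I would reuse verbatim the $5\times5$ matrix $\M$ and the vector $\bg$ from the proof of Lemma~\ref{L:k1k2p_p}, including the special last row with $\w_{5,1}=\w_{5,2}=\w_{5,3}=\w_{5,4}=0$ that pins the marked $(k_1,k_2)$ block to the right end and the tracking variable $u$, changing only the building blocks to the i.i.d.\ ones $g_i'=(z-z^{k_i})/(1-z)$ and $g_i=z^{k_i}/(1-z)$ given at the start of Section~\ref{S:iid}. Applying Eq.~\eqref{E:matrix} produces $\FS_2(u,z)$; then $[u]\FS_2(u,z)$ gives $\widetilde{H}_2(z)$, and subtracting it from the counting generating function $2z/(1-2z)$ of all non-empty i.i.d.\ sequences gives $\widetilde{H}_2'(z)$, mirroring the final step of Lemma~\ref{L:k1k2p_p}.

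The only point needing care, and the closest thing to an obstacle, is to make sure the rescaling $z\mapsto\n z$ is applied to the raw enumeration generating functions (equivalently to $H_2$ and $H_2'$ viewed as power series in $z$) rather than to anything already carrying a $1/\n^n$ normalization, and that the exponent of $z$ in the pattern term equals the total exponent of the $p_i$'s so that all powers of $\n$ cancel; once that is checked, both routes reduce to routine algebraic simplification.
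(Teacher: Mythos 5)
Your proposal is correct and matches the paper's intent exactly: the paper omits the proofs in Section~\ref{S:iid} but explicitly sanctions both of the routes you describe (the substitution $p_i\mapsto 1/\n$, $z\mapsto\n z$ applied to Lemma~\ref{L:k1k2p_p}, and the rerun of the $5\times 5$ matrix computation with the i.i.d.\ blocks $g_i'=(z-z^{k_i})/(1-z)$, $g_i=z^{k_i}/(1-z)$). Your algebraic verification of the specialization and your care about applying the rescaling to the un-normalized enumeration series are both sound, so nothing further is needed.
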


\begin{proposition}[distribution of $(k_1, k_2, k_3)$] \label{E:G3p}
The number of sequences of length $n$ with $m$ $(k_1, k_2, k_3)$ patterns
  is given by $\A_{3,n}(m) = [\w^m z^n] \Giid_3(\w, z) $,  
where
\begin{equation} \label{E:k1k2k3}
    \Giid_3(\w, z) = \frac{1-z}{(1-z)(1-3z) - (w-1) z^{k}},
\end{equation}
with $k=k_1+k_2+k_3$.
\end{proposition}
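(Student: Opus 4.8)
The fastest route is to descend from the non-i.i.d.\ generating function of Proposition~\ref{P:k1k2k3p} by the substitution announced at the start of this section, so I would first make that substitution precise. When $p_1=p_2=p_3=1/3$, every one of the $3^n$ words of length $n$ occurs with probability $3^{-n}$, hence $\PP_{3,n}(m)=3^{-n}\A_{3,n}(m)$ and $\F_{3,n}(\w)=3^{-n}\Fiid_{3,n}(\w)$ for all $n$ and $m$. Multiplying by $z^n$ and summing gives $G_3(\w,z)\big|_{p_1=p_2=p_3=1/3}=\sum_{n\ge0}3^{-n}\Fiid_{3,n}(\w)z^n=\Giid_3(\w,z/3)$, i.e.\ $\Giid_3(\w,z)=G_3(\w,3z)\big|_{p_1=p_2=p_3=1/3}$.

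Carrying out this replacement in Eq.~\eqref{E:k1k2k3p} is then routine: the numerator $1-p_2z$ becomes $1-\tfrac13(3z)=1-z$; in the denominator $(1-z)(1-p_2z)$ becomes $(1-3z)(1-z)$, and $p_1^{k_1}p_2^{k_2}p_3^{k_3}z^{k}$ becomes $3^{-k}(3z)^{k}=z^{k}$ with $k=k_1+k_2+k_3$. This yields exactly
\[
  \Giid_3(\w,z)=\frac{1-z}{(1-z)(1-3z)-(\w-1)z^{k}},
\]
and $\A_{3,n}(m)=[\w^mz^n]\Giid_3(\w,z)$ follows by reading off the coefficient.

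A self-contained alternative reruns the matrix computation of Proposition~\ref{P:k1k2k3p} with the i.i.d.\ ingredients: use the same $4\times4$ matrix shape, but with its first two rows and columns built from $\widetilde{H}_2'$, $\widetilde{H}_2$ of Lemma~\ref{L:k1k2p} and its last two from the i.i.d.\ blocks $g_3'=(z-z^{k_3})/(1-z)$ and $g_3=z^{k_3}/(1-z)$, take $\bg=[\widetilde{H}_2',\widetilde{H}_2,g_3',g_3]^T$, and keep $\w$ in the $(2,4)$ entry so that it marks precisely the configurations $\widetilde{H}_2\cdot g_3$, i.e.\ the $(k_1,k_2,k_3)$ patterns. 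Evaluating $\be\M^{-1}\bg$ and simplifying with $g_3+g_3'=z/(1-z)$ and $\widetilde{H}_2+\widetilde{H}_2'=2z/(1-2z)$ reduces everything to Eq.~\eqref{E:k1k2k3} once the polynomial identity $(1-z)^2(1-2z)-2z^2(1-z)=(1-z)(1-3z)$ is used.

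Neither route hides a genuine difficulty; the only thing to watch is the algebraic bookkeeping in the denominator --- checking that the powers $3^{-k}$ and $3^{k}$ cancel exactly, and that the numerator factor $1-z$ is not spuriously cancelled against the denominator (it is not, since $1-z$ divides $(1-z)(1-3z)$ but not $(\w-1)z^{k}$, hence does not divide their difference). The direct matrix route involves only the same $4\times4$ inversion already performed for the non-i.i.d.\ case, so it introduces nothing new.
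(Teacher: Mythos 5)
Your proposal is correct and follows exactly the two routes the paper itself indicates for Section~\ref{S:iid}, where the proof is omitted: the substitution $p_i\to 1/3$, $z\to 3z$ applied to Eq.~\eqref{E:k1k2k3p}, and the rerun of the $4\times4$ matrix computation with the i.i.d.\ blocks $\widetilde{H}_2'$, $\widetilde{H}_2$, $g_3'$, $g_3$. Both the justification of the substitution via $\PP_{3,n}(m)=3^{-n}\A_{3,n}(m)$ and the algebraic simplifications (including the identity $(1-z)(1-2z)-2z^2=1-3z$) check out.
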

Note the symmetry for $k_1, k_2$, and $k_3$ in Eq.~\eqref{E:k1k2k3} (as well as Eq.~\eqref{E:k1k2}).
This symmetry is broken in non-iid cases,
where non-identical probabilities for each object are used (Eq.~\eqref{E:k1k2k3p} for $\n=3$
and Eq.~\eqref{E:genp} for arbitrary $\n$).

As a numerical example for $\n=3$, the following is the generating function
$\Fiid_{3, 17}(\w) =  \sum_j A_{3,n}( j) \w^j$ for $n=17, k_1=2, k_2=2, k_3=3$:
\[
  \Fiid_{3, 17}(\w) =  128210550 + 929204 \w + 409 \w^2 .
\]
The \gf{} $\Fiid_{3, 17}(\w)$ shows that, among $3^{17}$ sequences of length $17$,
there are $128210550$, $929204$, and $409$ sequences with $0$, $1$, and $2$
$(k_1, k_2, k_3)$ patterns, respectively.

%
%

\begin{corollary}[average of $\X_{3; k_1,k_2, k_3}^{(n)}$]
  The average of $\X_{3; k_1,k_2, k_3}^{(n)}$ is given by
\begin{equation*} \label{E:a3}
 E[ \X_{3; k_1,k_2, k_3}^{(n)} ] = \frac{1}{3^n} \left[ \frac{1}{4} + \frac{1}{4} (1+2n-2k)3^{n-k+1}   \right] ,
\end{equation*}
where $k=k_1 + k_2 + k_3$.
\end{corollary}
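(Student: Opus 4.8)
The plan is to obtain the expectation by differentiating the double generating function $\Giid_3(\w,z)$ of Proposition~\ref{E:G3p} with respect to the pattern-marking variable $\w$ and then extracting a coefficient of $z^n$. Since $\Giid_3(\w,z)=\sum_{n}\sum_{j}\A_{3,n}(j)\,\w^j z^n$, applying $\pd/\pd\w$ and setting $\w=1$ removes the marking and leaves $\sum_{n}\bigl(\sum_{j} j\,\A_{3,n}(j)\bigr)z^n$; the bracketed coefficient is the total number of $(k_1,k_2,k_3)$ patterns summed over the $3^n$ equiprobable length-$n$ sequences, so dividing by $3^n$ gives $E[\X_{3;k_1,k_2,k_3}^{(n)}]$. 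A short direct computation from Eq.~\eqref{E:k1k2k3} gives
\[
  \frac{\pd \Giid_3(\w,z)}{\pd\w}\Big|_{\w=1}=\frac{z^{k}}{(1-z)(1-3z)^2},
\]
so the remaining task is to read off $[z^n]$ of this rational function.

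Next I would expand $\dfrac{1}{(1-z)(1-3z)^2}$ in partial fractions as $\dfrac{A}{1-z}+\dfrac{B}{1-3z}+\dfrac{C}{(1-3z)^2}$; evaluating at $z=1$, $z=1/3$, and $z=0$ yields $A=\tfrac14$, $C=\tfrac32$, $B=-\tfrac34$. Using the standard expansions $[z^m](1-z)^{-1}=1$, $[z^m](1-3z)^{-1}=3^m$, $[z^m](1-3z)^{-2}=(m+1)3^m$, together with the shift $[z^n]z^k f(z)=[z^{n-k}]f(z)$, this gives
\[
  [z^n]\frac{z^{k}}{(1-z)(1-3z)^2}=\frac14+\frac14\,(1+2n-2k)\,3^{\,n-k+1},
\]
and dividing by $3^n$ produces exactly the claimed formula. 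I would also note that $\F_{3,n}(\w)=1$ for $n<k$ is consistent here once negative powers of $z$ are read as zero, so the extraction is valid for all $n\ge 0$.

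There is no genuine obstacle; the only mild subtlety is bookkeeping — keeping straight that $\Giid_3$ enumerates sequences rather than probabilities, so that the final division by $3^n$ (equivalently the substitution $p_i\mapsto 1/\n$, $z\mapsto \n z$ used elsewhere in the paper) is what turns the pattern-count generating function into an expectation. As a consistency check I would also observe that the same formula drops out of Corollary~\ref{C:a3p}: setting $p_1=p_2=p_3=1/3$ in Eq.~\eqref{E:a3p} gives $p_1^{k_1}p_2^{k_2}p_3^{k_3}=3^{-k}$ and $1-p_2=2/3$, and after factoring out $3^{-n}$ a brief rearrangement collapses the two terms of Eq.~\eqref{E:a3p} into $\tfrac14$ and $\tfrac14(1+2n-2k)3^{\,n-k+1}$, matching the statement.
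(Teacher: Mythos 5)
Your proposal is correct and follows essentially the same route as the paper: the paper's (omitted) proof likewise differentiates the i.i.d.\ double generating function of Eq.~\eqref{E:k1k2k3} at $\w=1$ to obtain $z^{k}/\bigl((1-z)(1-3z)^{2}\bigr)$ and then extracts $[z^{n}]$, which your partial-fraction computation carries out correctly (and your cross-check against Eq.~\eqref{E:a3p} with $p_i=1/3$ is consistent).
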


\section{Concluding Remarks} \label{S:rem}

The interaction factors $\w_{ij}$ in Eq.~\eqref{E:M} naturally link two adjacent blocks together,
so the method described in Section~\ref{S:method} can easily be used to study compound patterns that involve two sub-patterns,
such as $(k_1, k_2)$ as shown in Section~\ref{S:2}.
When the method is applied iteratively,  with the help of some special patterns
(such as those shown in Section~\ref{S:2p}),
compound patterns that involve multiple sub-patterns can also be handled by the method.
In this paper we only apply the method to one particular compound pattern.
Apparently, other compound patterns that are composed of a set of multiple sub-patterns
can also be investigated by the method in a similar manner.
We hope that the method can be used as a basis for further research
on various other complicated patterns.


 \newcommand{\noop}[1]{}

\end{document}